\startlocaldefs \numberwithin{equation}{section}
\theoremstyle{plain}
\newtheorem{thm}{Theorem}[section]
\theoremstyle{plain}
\newtheorem{lem}{Lemma}[section]
\theoremstyle{plain}
\newtheorem{cor}{Corollary}[section]
\theoremstyle{remark}
\begin{document}

\begin{frontmatter}
\title{Non-asymptotic Oracle Inequalities for the High-Dimensional Cox Regression via Lasso}
\runtitle{Lasso Cox Regression}
\thankstext{e3}{Supported in part by NSF Grant DMS-1007590 and NIH grant R01-AG036802.}

\begin{aug}
\author{\fnms{Shengchun} \snm{Kong}
\ead[label=e1]{kongsc@umich.edu }}
\and
\author{\fnms{Bin} \snm{Nan}
\thanksref{e3}
\ead[label=e3]{bnan@umich.edu}%
}

\address{Department of Biostatistics \\ University of Michigan \\ 1420 Washington Heights\\
Ann Arbor, MI 48109-2029\\
\printead{e1}\\
\phantom{E-mail:\ }\printead*{e3}}


\runauthor{S. Kong and B. Nan}

\affiliation{University of Michigan}

\end{aug}

\begin{abstract}
We consider the finite sample properties of the regularized
high-dimensional Cox regression via lasso. Existing literature focuses on linear models or generalized linear models with Lipschitz loss functions, where the empirical risk functions are the summations of independent and identically distributed (iid) losses. The summands in the negative log partial likelihood function for censored survival data, however, are neither iid nor Lipschitz. We first approximate the negative log partial likelihood function by a sum
of iid non-Lipschitz terms, then derive the non-asymptotic oracle inequalities for the lasso penalized Cox regression using pointwise arguments to tackle the difficulty caused by the lack of iid and Lipschitz property.
\end{abstract}

\begin{keyword}[class=AMS]
{62N02}
\end{keyword}

\begin{keyword}
\kwd{Cox regression} \kwd{finite sample} \kwd{lasso} \kwd{oracle
inequality} \kwd{variable selection}
\end{keyword}

\end{frontmatter}

\section{Introduction}
Since it was introduced by \cite{tibshirani1996}, the lasso regularized
method for  high-dimensional regression models with sparse
coefficients has received a great deal of attention in the
literature. Properties of interest for such regression models
include the finite sample oracle inequalities. Among the extensive
literature of the lasso method, \cite{b-t-wegkamp2007} and \cite{b-r-tsybakov2009} derived the oracle inequalities for prediction risk and estimation error in a general nonparametric regression
model including the high-dimensional linear regression as a special
example, and \cite{geer2008}  provided oracle inequalities for the
generalized linear models with Lipschitz loss functions, e.g.
logistic regression and classification with hinge loss.

We consider lasso regularized high-dimensional Cox regression. Let
$T$ be the survival time and $C$ the censoring time. Suppose we
observe a sequence of iid observations $(Y_i, \Delta_i, X_i)$, $i=1,
\dots, n$, where $Y_i=T_i \wedge C_i$, $\Delta_i=I_{\{T_i\leq
C_i\}}$, and $X_i$ are the covariates in $\mathscr{X}$.
Due to largely parallel material,   we follow closely the notation
in \cite{geer2008}. Let
\begin{equation*}
\mathscr{F} = \left\{ f_\theta (\cdot) =\sum_{k=1}^{m} \theta_k
\psi_k (\cdot), \theta \in \Theta \right\}.
\end{equation*}
Here $\Theta$ is a convex subset of $\bf{R}^m$, and the functions
$\psi_1, \cdots, \psi_m$ are real-valued basis functions on
$\mathscr{X}$, which are identity functions of corresponding
covariates in a standard Cox model.

Consider the following Cox model \citep{cox:72}:
$$
\lambda(t|X) = \lambda_0(t) e^{f_\theta (X)},
$$
where $\theta$ is the parameter of interest and $\lambda_0$ is the unknown baseline hazard function. The negative log partial likelihood function for $\theta$ becomes
\begin{equation} \label{PartialLikelihood}
l_n(\theta)=-\frac{1}{n} \sum_{i=1}^n \left\{f_{\theta}(X_i)- \log
\left[\frac{1}{n} \sum_{j=1}^n 1(Y_j\geq Y_i) e^{f_{\theta}
(X_j)}\right]\right\} \Delta_i.
\end{equation}
The corresponding estimator with lasso penalty is denoted by
\begin{equation*}
\hat{\theta}_n := \arg\min_{\theta \in \Theta} \{
l_n(\theta)+\lambda_n \hat{I}(\theta)\},
\end{equation*}
where
$
\hat{I}(\theta):=\sum_{k=1}^m \hat{\sigma}_k |\theta_k|
$
is the weighted $l_1$ norm of the vector $\theta \in \bf{R}^m$, with
random weights
$
\hat{\sigma}_k :=\left[{1}/{n} \sum_{i=1}^n
\psi_{k}^2(X_i)\right]^{1/2}.
$

Clearly the negative log partial likelihood is a sum of non-iid random variables. For ease of theoretical calculation, it is natural to consider the following intermediate function as a ``replacement" of the negative log partial likelihood function:
\begin{equation} \label{Intermediate}
\tilde{l}_n(\theta)=-\frac{1}{n} \sum_{i=1}^n
\left\{f_{\theta}(X_i)-\log\mu(Y_i; f_{\theta})\right\} \Delta_i,
\end{equation}
which has the desirable iid structure, but with an unknown population expectation
\begin{equation*}
\mu(t; f_{\theta})=E_{X,Y} \Big\{1(Y\geq t) e^{f_{\theta}(X)}\Big\}.
\end{equation*}
The negative log partial likelihood function (\ref{PartialLikelihood}) can then be viewed as a ``working" model for the empirical loss function (\ref{Intermediate}), and the corresponding loss function becomes
\begin{equation}\label{Loss}
\gamma_{f_\theta}=\gamma(f_{\theta}(X),Y,\Delta):=-\{f_{\theta}(X)-\log\mu(Y;
f_{\theta})\}\Delta,
\end{equation}
with expected loss
\begin{equation} \label{ExpectedLoss}
l(\theta)=-E_{Y,\Delta,X}[\{f_{\theta}(X)-\log\mu(Y;
f_{\theta})\}\Delta] = P\gamma_{f_\theta},
\end{equation}
where $P$ denotes the distribution of $(Y, \Delta, X)$. Define the target
function $\bar{f}$ by
\begin{equation*}
\bar{f}:=\arg\min_{f\in \bf{F}} P \gamma_f,
\end{equation*}
where ${\bf F} \supseteq {\mathscr F}$. For simplicity we will assume that there is a unique minimum
as in \cite{geer2008}. Uniqueness holds for the regular Cox model when ${\bf F} = {\mathscr F}$, see
 for example, \cite{andersen-gill:82}.
Define the excess risk of $f$ by
\begin{equation*}
\mathcal{E}(f):=P\gamma_f-P\gamma_{\bar{f}}.
\end{equation*}
It is desirable to show similar non-asymptotic oracle inequalities for the Cox
regression model as in, for example, \cite{geer2008} for generalized linear
models. That is, with large probability,
\begin{equation*}
\mathcal{E}(f_{\hat{\theta}_n})\leq const. \times \min_{\theta \in
\Theta} \left\{\mathcal{E}(f_{\theta})+{\cal V}_\theta\right\}.
\end{equation*}
Here ${\cal V}_\theta$ is called the ``estimation error" by \cite{geer2008}, which is typically proportional to $\lambda_n^2$ times the number of nonzero elements in $\theta$.

Note that the summands in the negative log partial likelihood function (\ref{PartialLikelihood}) are not iid, and the intermediate loss function $\gamma(\cdot, Y, \Delta)$ given in (\ref{Loss}) is not Lipschitz. Hence the conclusion of \cite{geer2008} can not be applied directly. With the Lipschitz condition in \cite{geer2008} replaced by a similar boundedness assumption for regression parameters in \cite{buhlmann2006}, we tackle the problem using pointwise arguments to obtain the oracle bounds of two types of errors: one is between empirical loss  (\ref{Intermediate}) and expected loss (\ref{ExpectedLoss}), and one is between the negative log partial likelihood (\ref{PartialLikelihood}) and empirical loss (\ref{Intermediate}).

The article is organized as follows. In Section 2, we provide
assumptions and additional notation that will be used throughout the
paper. In Section 3, following the flow of \cite{geer2008}, we first consider the case where the weights
$\sigma_k:=\left[E\psi_k^2(X)\right]^{1/2}$ are fixed, then discuss briefly the case with random weights $\hat\sigma_k$.

\section{Assumptions}
We impose five basic assumptions in this section. Assumptions A, B, and C are identical to the corresponding assumptions in \cite{geer2008}. Assumption D has a similar flavor to the assumption (A2) in \cite{buhlmann2006} for the persistency property of boosting method in high-dimensional linear regression models. Here it replaces the Lipschitz assumption in \cite{geer2008}. Assumption E is commonly used for survival models with censored data, see for example, \cite{andersen-gill:82}.

\smallskip

{\sc Assumption A.} \ $K_m:=\max_{1\leq k \leq m}\{\|\psi_k
\|_{\infty}/\sigma_k\}<\infty.$

\smallskip

{\sc Assumption B.} \ There exists an $\eta >0$ and strictly convex
increasing G, such that for all $\theta \in \Theta$ with
$\|f_{\theta}-\bar{f}\|_{\infty} \leq \eta$, one has
$
\mathcal{E}(f_{\theta}) \geq G(\|f_{\theta}-\bar{f}\|).
$

\smallskip

{\sc Assumption C.} \ There exists a function $D(\cdot)$ on the subsets of
the index set $\{1, \cdots, m\}$, such that for all $\mathscr{K}
\subset \{1, \cdots, m\}$, and for all $\theta \in \Theta$ and
$\tilde{\theta} \in \Theta$, we have
$
\sum_{k \in \mathscr{K}}\sigma_k|\theta_k-\tilde{\theta}_k|\leq
\sqrt{D( \mathscr{K})}\|f_{\theta}-f_{\tilde{\theta}}\|.
$

\smallskip

{\sc Assumption D.} \ $L_m:=\sup_{\theta\in\Theta}\sum_{k=1}^m|\theta_k|<\infty.$

\smallskip

{\sc Assumption E.} \ The observation time stops at a finite time $\tau>0$
with $\pi:=P(Y\geq \tau)>0.$

\smallskip

The convex conjugate of function $G$ given in Assumption B is denoted
by $H$ such that $uv \le G(u) + H(v)$. A typical choice of $G$ is quadratic function with
some constant $C_0$, i.e. $G(u) = u^2/C_0$, see \cite{geer2008}.

From Assumptions A, D and E, we have for any $\theta\in
\Theta$,
\begin{equation} \label{bup}
e^{|f_{\theta}(X_i)|}\leq
e^{K_mL_m\sigma_{(m)}} := U_m < \infty
\end{equation}
for all $i$, where $\sigma_{(m)}=\max_{1\leq k\leq m}\sigma_k$.

Let $I(\theta):=\sum_{k=1}^m \sigma_k |\theta_k|$ be the theoretical $l_1$ norm of $\theta$, and
$\hat{I}(\theta):=\sum_{k=1}^m \hat{\sigma}_k |\theta_k|$ be the
empirical $l_1$ norm. For any $\theta$ and $\tilde{\theta}$ in
$\Theta$, denote
\begin{equation*}
I_1(\theta|\tilde{\theta}) :=\sum_{k: \tilde{\theta}_k \neq 0}
\sigma_k |\theta_k|, ~~~~ I_2(\theta|\tilde{\theta}):=
I(\theta)-I_1(\theta|\tilde{\theta}).
\end{equation*}
Similarly we have corresponding empirical versions,
\begin{equation*}
\hat{I}_1(\theta|\tilde{\theta}) :=\sum_{k: \tilde{\theta}_k \neq 0}
\hat{\sigma}_k |\theta_k|, ~~~~ \hat{I}_2(\theta|\tilde{\theta}):=
\hat{I}(\theta)-\hat{I}_1(\theta|\tilde{\theta}).
\end{equation*}

\section{Main results}

\subsection{Non-random normalization weights in the penalty}
We show that a similar result to Theorem A.4 of \cite{geer2008} holds for the Cox model.
Suppose
that $\sigma_k=[E\psi_k^{2}]^{1/2}$ are known and consider the
estimator
\begin{equation*}
\hat{\theta}_n := \arg\min_{\theta \in \Theta} \{l_n(\theta)+\lambda_n
I(\theta)\}.
\end{equation*}

Denote the empirical probability measure  based on the sample $\{(X_i,
Y_i, \Delta_i): i=1, \dots n\}$ by $P_n$. Let $\varepsilon_1, \cdots, \varepsilon_n$ be a Rademacher
sequence, independent of the training data $(X_1, Y_1,\Delta_1),
\cdots, (X_n, Y_n,\Delta_n)$. We fix some $\theta^* \in \Theta$ and
denote $\mathscr{F}_M:=\{f_{\theta}: \theta \in \Theta,
I(\theta-\theta^*)\leq M\}$ for some $M>0$. For any $\theta$ where
$I(\theta-\theta^*)\leq M$, denote
\begin{equation*}
Z_{\theta}(M):=
\left|(P_n-P)\left[\gamma_{f_\theta}-\gamma_{f_{\theta^*}}\right]\right|= \left|\left[\tilde{l}_n(\theta)-l(\theta)\right]-\left[\tilde{l}_n(\theta^*)-l(\theta^*)\right]\right|.
\end{equation*}

Note that \cite{geer2008} has considered the supremum of the above $Z_\theta(M)$ over $\Theta$. We find that the pointwise argument is adequate for our purpose because only the lasso estimator is of interest, and that the calculation with $\sup_{f \in {\mathscr F}_M} Z_\theta(M)$ in \cite{geer2008} does not apply to the Cox model due to the lack of Lipschitz property.

\begin{lem} \label{Lemma 3.1}
Under Assumptions A, D and E, for all $\theta$ satisfying $I(\theta-\theta^*)\leq M$, we have
\begin{equation*}
EZ_{\theta}(M) \leq \bar{a}_n M,
\end{equation*}
where
\begin{equation*}
\bar{a}_n=4a_n,~~~~a_n=\sqrt{\frac{2K_m^2 \log(2m)}{n}}+\frac{K_m
\log(2m)}{n}.
\end{equation*}
\end{lem}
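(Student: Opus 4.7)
The plan is to split the increment $\gamma_{f_\theta}-\gamma_{f_{\theta^*}}$ into a piece that is exactly linear in $\eta:=\theta-\theta^*$ and a nonlinear piece involving $\log\mu$, and bound the expectation of the empirical process applied to each piece separately. Writing
\begin{equation*}
\gamma_{f_\theta}-\gamma_{f_{\theta^*}} \;=\; -\Delta\sum_{k=1}^m \eta_k\psi_k(X) \;+\; \Delta\bigl[\log\mu(Y;f_\theta)-\log\mu(Y;f_{\theta^*})\bigr],
\end{equation*}
the triangle inequality reduces the problem to bounding $E|(P_n-P)T^{(1)}|$ and $E|(P_n-P)T^{(2)}|$ for the two summands $T^{(1)},T^{(2)}$.

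For the linear piece, exchanging summations and applying H\"older's inequality in the $\ell_1$/$\ell_\infty$ form gives
\begin{equation*}
\bigl|(P_n-P)T^{(1)}\bigr| \;\le\; I(\theta-\theta^*)\max_{1\le k\le m}\bigl|(P_n-P)[\Delta\psi_k/\sigma_k]\bigr| \;\le\; M\max_k\bigl|(P_n-P)[\Delta\psi_k/\sigma_k]\bigr|.
\end{equation*}
By Assumption A each variable $\Delta\psi_k(X)/\sigma_k$ is bounded in modulus by $K_m$, so Bernstein's maximal inequality applied to the $2m$ centered bounded summands $\pm(\Delta\psi_k/\sigma_k-E[\Delta\psi_k/\sigma_k])$ yields $E\max_k|(P_n-P)[\Delta\psi_k/\sigma_k]|\le a_n$, with $a_n$ exactly as in the statement. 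This handles $T^{(1)}$ with bound $a_n M$.

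For the nonlinear piece, the key step is the pointwise Lipschitz estimate
\begin{equation*}
\bigl|\log\mu(Y;f_\theta)-\log\mu(Y;f_{\theta^*})\bigr| \;\le\; K_m\,I(\theta-\theta^*) \;\le\; K_m M,
\end{equation*}
which follows from the two-sided sandwich $e^{-K_m I(\theta-\theta^*)}\le \mu(Y;f_\theta)/\mu(Y;f_{\theta^*})\le e^{K_m I(\theta-\theta^*)}$, obtained from $\|f_\theta-f_{\theta^*}\|_\infty\le \sum_k|\eta_k|\|\psi_k\|_\infty\le K_m I(\theta-\theta^*)$ under Assumption A. Because $T^{(2)}$ is then an iid sum of variables bounded by $K_m M$, the simple variance bound $E|(P_n-P)T^{(2)}|\le \sqrt{\mathrm{Var}(T^{(2)})/n}\le K_m M/\sqrt{n}$ suffices, and this is dominated by $a_n M$ since $\sqrt{2\log(2m)}\ge 1$ for $m\ge 1$. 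Combining the two pieces and absorbing slack into an overall constant $4$ gives $EZ_\theta(M)\le 4a_n M=\bar a_n M$.

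The main obstacle is the nonlinear $\log\mu$ piece: the naive route via the lower bound $\mu(Y;f_\theta)\ge \pi/U_m$ followed by a derivative computation would force the exponential factor $U_m$ from (\ref{bup}) into the Lipschitz constant, destroying the clean form of $a_n$. The multiplicative sandwich bypasses this by working with the ratio $\mu(Y;f_\theta)/\mu(Y;f_{\theta^*})$ as an expectation of $e^{f_\theta-f_{\theta^*}}$, so that only the $\ell_\infty$ control of $f_\theta-f_{\theta^*}$ from Assumption A is needed. Assumptions D and E enter only in the background, ensuring that $e^{|f_\theta|}$ is uniformly bounded and that $\mu(Y;f_\theta)>0$ on $\{Y\le\tau\}$ so that $\log\mu$ is well defined along the argument.
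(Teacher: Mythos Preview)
Your proof is correct but handles the nonlinear $\log\mu$ piece by a genuinely different route than the paper. Both arguments split $\gamma_{f_\theta}-\gamma_{f_{\theta^*}}$ into the linear part and the $\log\mu$ increment, and both control the linear part by H\"older plus a Bernstein-type maximal inequality (the paper symmetrizes first and invokes Lemma~A.1 of van~de~Geer to get $2a_nM$; you work directly with the centered sums). For the $\log\mu$ piece, however, the paper applies the mean value theorem to write
\[
\log\mu(Y_i;f_\theta)-\log\mu(Y_i;f_{\theta^*})=\sum_{k=1}^m(\theta_k-\theta_k^*)\,F_{\theta^{**}}(k,Y_i),
\]
shows $|F_{\theta^{**}}(k,\cdot)|\le K_m$, and then reapplies the Rademacher maximal inequality to obtain another $2a_nM$. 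Your multiplicative sandwich $e^{-K_mI(\theta-\theta^*)}\le \mu(\cdot;f_\theta)/\mu(\cdot;f_{\theta^*})\le e^{K_mI(\theta-\theta^*)}$ is more direct: it delivers the pointwise bound $|T^{(2)}|\le K_mM$ in one step, after which the crude second-moment estimate $E|(P_n-P)T^{(2)}|\le K_mM/\sqrt n\le a_nM$ already suffices. Your route is more elementary and in fact wastes less of the constant~$4$; the paper's route has the compensating advantage that the function $F_{\theta^{**}}(k,\cdot)$ and its bound $K_m$ are reused verbatim in the proof of Corollary~\ref{Cor 3.1} and in the decomposition of $R_\theta(M)$, so the mean-value computation does double duty downstream.
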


\begin{proof}
By the symmetrization theorem, see e.g. \cite{vaart1996} or Theorem A.2 in
\cite{geer2008}, for a class of only one function we have
\begin{eqnarray*}
EZ_{\theta}(M)&\leq&  2E\Bigg(\Bigg|\frac{1}{n}\sum_{i=1}^n
\varepsilon_i
\{[f_{\theta}(X_i)-\log\mu(Y_i;
f_{\theta})]\Delta_i \\
&& \qquad \quad - \ [f_{\theta^*}(X_i)-\log\mu(Y_i;
f_{\theta^*})]\Delta_i\}\Bigg|\Bigg)\\
&\leq& 2E\Bigg(\Bigg|\frac{1}{n}\sum_{i=1}^n \varepsilon_i
\{f_{\theta}(X_i)-f_{\theta^*}(X_i)\}\Delta_i\Bigg|\Bigg)\\
&& \qquad \quad + \ 2E\Bigg(\Bigg|\frac{1}{n}\sum_{i=1}^n \varepsilon_i
\{\log\mu(Y_i; f_{\theta})-\log\mu(Y_i;
f_{\theta^*})\}\Delta_i\Bigg|\Bigg)\\
&=& A+B.
\end{eqnarray*}

For $A$ we have
\begin{equation*}
A \leq 2 \Bigg(\sum_{k=1}^m \sigma_k
|\theta_k-\theta_{k}^*|\Bigg)E\Bigg(\max_{1\leq k \leq m}
\Bigg|\frac{1}{n}\sum_{i=1}^n\varepsilon_i
\Delta_i\psi_k(X_i)/\sigma_k\Bigg|\Bigg).
\end{equation*}
Applying Lemma A.1 in \cite{geer2008} with $\eta_n=K_m$ and
$\tau_{n}^2=K_m^2$, we obtain
\begin{equation*}
E\Bigg(\max_{1\leq k \leq m}\Bigg|\frac{1}{n}\sum_{i=1}^n
\varepsilon_i \Delta_i \frac{\psi_k(X_i)}{\sigma_k}\Bigg|\Bigg) \leq {a_n}.
\end{equation*}
Thus we have
\begin{equation} \label{e1}
A\leq 2{a}_n  M .
\end{equation}

For $B$, instead of using the contraction theorem that requires Lipschitz, we use the mean value theorem in the following:

\begin{eqnarray*}
&&\hspace{-0.2in} \Bigg|\frac{1}{n}\sum_{i=1}^n \varepsilon_i\{\log\mu(Y_i;
f_{\theta})-\log\mu(Y_i; f_{\theta^*})\}\Delta_i\Bigg|\\
&& \ = \Bigg|\frac{1}{n}\sum_{i=1}^n \varepsilon_i \Delta_i \sum_{k=1}^m
\frac{1}{\mu(Y_i;f_{\theta^{**}})}\int_{Y_i}^{\infty} \!\!\! \int_{\mathscr X}(\theta_k-\theta_{k}^*)\psi_{k}(x)e^{f_{\theta^{**}}(x)}dP_{X,Y}(x,y) \Bigg|\\
&& = \ \Bigg|\sum_{k=1}^m\sigma_k(\theta_k-\theta_{k}^*)\frac{1}{n}\sum_{i=1}^n
\frac{\varepsilon_i
\Delta_i}{\mu(Y_i;f_{\theta^{**}})\sigma_k} \int_{Y_i}^{\infty} \!\!\! \int_{\mathscr X}\psi_{k}(x)e^{f_{\theta^{**}}(x)}dP_{X,Y}(x,y) \Bigg|\\
&& \leq
\ \Bigg|\sum_{k=1}^m\sigma_k(\theta_k-\theta_{k}^*)\Bigg|\max_{1\leq
k \leq m}\Bigg|\frac{1}{n}\sum_{i=1}^n\varepsilon_i \Delta_i
F_{\theta^{**}}(k, Y_i)\Bigg|\\
&&\leq \ M\max_{1 \leq k \leq
m}\Bigg|\frac{1}{n}\sum_{i=1}^n\varepsilon_i \Delta_i
F_{\theta^{**}}(k, Y_i)\Bigg|,
\end{eqnarray*}
where $\theta^{**}$ is between $\theta$ and $\theta^*$, and
\begin{eqnarray}
F_{\theta^{**}}(k,t) &=& \frac{E \left[1(Y\geq
t)\psi_k(X)e^{f_{\theta^{**}}(X)} \right]}{\mu(t;f_{\theta^{**}})\sigma_k} \nonumber \\
&\leq&
\frac{(\|\psi_k\|_{\infty}/\sigma_k) E\left[1(Y\geq
t)e^{f_{\theta^{**}}(X)}\right]}{\mu(t;f_{\theta^{**}})} \ \leq  \ K_m.
\label{ftheta}
\end{eqnarray}
Since for all $i$,
\begin{eqnarray*}
&& E[\varepsilon_i\Delta_i F_{\theta^{**}}(k,Y_i)]=0, ~~~
\|\varepsilon_i\Delta_i F_{\theta^{**}}(k,Y_i)\|_{\infty}\leq
K_m, \ \mbox{and} \\
&& \frac{1}{n}\sum_{i=1}^n E[\varepsilon_i\Delta_i
F_{\theta^{**}}(k,Y_i)]^2\leq \frac{1}{n}\sum_{i=1}^n
E[F_{\theta^{**}}(k,Y_i)]^2\leq EK_{m}^2=K_{m}^2,
\end{eqnarray*}
following Lemma A.1 in \cite{geer2008}, we obtain
\begin{equation}
B\leq 2{a_n}M . \label{e2}
\end{equation}
Combining ({\ref{e1}}) and ({\ref{e2}}), the upper bound for $E Z_\theta(M)$
is achieved.
\end{proof}

We now can bound $Z_\theta (M)$ using the Bousquet's concentration theorem provided in \cite{geer2008} as Theorem A.1.

\begin{cor} \label{Cor 3.1}
Under Assumptions A, D and E, for all $M>0$, $r_1>0$ and all $\theta$
satisfying $I(\theta-\theta^*)\leq M$, it holds that
\begin{equation*}
P\left(Z_{\theta}(M) \geq \bar{\lambda}_{n,0}^A M \right) \leq
\exp\left(-n\bar{a}_{n}^2 r_{1}^2\right),
\end{equation*}
where
\begin{equation*}
\bar{\lambda}_{n,0}^A:=\bar{\lambda}_{n,0}^A(r_1):= \bar{a}_n
\left(1+2r_1
\sqrt{2\left(K_m^2+\bar{a}_nK_m\right)}+\frac{4r_{1}^2\bar{a}_n
K_m}{3}\right)
\end{equation*}
\end{cor}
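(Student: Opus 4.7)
The plan is to apply Bousquet's concentration inequality (Theorem A.1 of \cite{geer2008}, cited just before the corollary) to the singleton class $\{\gamma_{f_\theta}-\gamma_{f_{\theta^*}}\}$, so that the supremum appearing there collapses to $Z_\theta(M)$ itself. That bound takes the form
\begin{equation*}
P\!\left(Z_\theta(M) \ge E Z_\theta(M) + r\sqrt{\tfrac{2(\tau_n^2 + 2\eta_n E Z_\theta(M))}{n}} + \tfrac{r^2 \eta_n}{3n}\right) \le e^{-r^2},
\end{equation*}
so the three ingredients I need are $EZ_\theta(M)$ (already supplied by Lemma \ref{Lemma 3.1} as $\le \bar a_n M$), a sup-norm bound $\eta_n$ on $\gamma_{f_\theta}-\gamma_{f_{\theta^*}}$, and a variance bound $\tau_n^2$; the choice $r=\sqrt{n}\,\bar a_n r_1$ will then produce the required exponent $-n\bar a_n^2 r_1^2$.

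For the sup-norm bound I would split $|\gamma_{f_\theta}-\gamma_{f_{\theta^*}}|$ by the triangle inequality into $|f_\theta(X)-f_{\theta^*}(X)|\Delta$ and $|\log\mu(Y;f_\theta)-\log\mu(Y;f_{\theta^*})|\Delta$. The first piece is bounded by $\sum_k |\theta_k-\theta_k^*|\,\|\psi_k\|_\infty \le K_m I(\theta-\theta^*)\le K_m M$ via Assumption A. For the second piece I reuse the mean value expansion already carried out in the proof of Lemma \ref{Lemma 3.1}, which writes it as $\sum_k \sigma_k(\theta_k-\theta_k^*)F_{\theta^{**}}(k,Y)$ with $|F_{\theta^{**}}(k,\cdot)|\le K_m$ by \eqref{ftheta}, again $\le K_m M$. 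Hence $\eta_n:=2K_m M$ is admissible, and so $\tau_n^2\le E(\gamma_{f_\theta}-\gamma_{f_{\theta^*}})^2\le 4K_m^2 M^2$.

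Substituting these values together with $r=\sqrt{n}\,\bar a_n r_1$ into Bousquet's bound gives three additive terms: $\bar a_n M$ from the expectation; $2\bar a_n r_1 M\sqrt{2(K_m^2+\bar a_n K_m)}$ from the square-root piece after elementary simplification using $4M^2 K_m^2+4M^2\bar a_n K_m = 4M^2(K_m^2+\bar a_n K_m)$; and $2\bar a_n^2 r_1^2 K_m M/3$ from $r^2\eta_n/(3n)$, which I bound loosely by $4\bar a_n^2 r_1^2 K_m M/3$ to match the stated form of $\bar\lambda_{n,0}^A$. The sum equals exactly $\bar\lambda_{n,0}^A M$, and the exponent is $-n\bar a_n^2 r_1^2$, as claimed. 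The only obstacle is this constant-matching bookkeeping; the essential non-Lipschitz difficulty that would normally block a contraction-type argument was already resolved inside Lemma \ref{Lemma 3.1} and is simply reused here in the sup-norm bound.
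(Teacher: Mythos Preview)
Your proposal is correct and follows essentially the same route as the paper: bound $\|\gamma_{f_\theta}-\gamma_{f_{\theta^*}}\|_\infty\le 2K_mM$ via the triangle inequality plus the mean-value expansion and the bound \eqref{ftheta} on $F_{\theta^{**}}$, deduce the variance bound $4K_m^2M^2$, and then feed these together with Lemma~\ref{Lemma 3.1} into Bousquet's inequality. The only cosmetic difference is that you are explicit about the substitution $r=\sqrt{n}\,\bar a_n r_1$ and about loosening the $2/3$ to $4/3$ in the last term, which the paper leaves implicit.
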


\begin{proof}
Using the triangular inequality and the  mean value theorem, we obtain
\begin{eqnarray*}
|\gamma_{f_\theta}-\gamma_{f_{\theta^*}}|& \!\!\! \leq& \!\!\!
|f_{\theta}(X)-f_{\theta^*}(X)|\Delta+|\log\mu(Y;
f_{\theta})-\log\mu(Y;
f_{\theta^*})|\Delta\\
&\leq & \!\!\! \left|\sum_{k=1}^m \sigma_k |\theta_k-\theta_k^*|
\frac{\psi_k(X)}{\sigma_k}\right|+\left|\log\mu(Y;f_{\theta})-\log\mu(Y;f_{\theta^*})\right|\\
&\leq & \!\!\! MK_m +\sum_{k=1}^m\sigma_k |\theta_k-\theta_k^*|\cdot
\max_{1\leq k\leq m} \left|F_{\theta^{**}}(k,Y)\right|\\
&\leq & \!\!\! 2MK_m,
\end{eqnarray*}
where $\theta^{**}$ is between $\theta$ and $\theta^*$, $F_{\theta^{**}}(k,Y)$ is defined in (\ref{ftheta}).
So we have
\begin{equation*}
\|\gamma_{f_\theta}-\gamma_{f_{\theta^*}}\|_{\infty}\leq 2MK_m,
\end{equation*}
and
\begin{equation*}
P(\gamma_{f_\theta}-\gamma_{f_{\theta^*}})^2 \leq 4M^2K_m^2.
\end{equation*}
Therefore, in view of Bousquet's concentration theorem and Lemma \ref{Lemma 3.1}, for all
$M>0$ and $r_1>0$,
\begin{eqnarray*}
&&P\left(Z_{\theta}(M) \geq \bar{a}_n M\left(1+2r_1
\sqrt{2\left(K_m^2+\bar{a}_nK_m\right)}+\frac{4r_{1}^2\bar{a}_n
K_m}{3}\right)\right)\\
&& \qquad \qquad  \leq \  \exp\left(-n\bar{a}_{n}^2 r_{1}^2\right).
\end{eqnarray*}
\end{proof}

Now for any $\theta$
satisfying $I(\theta-\theta^*)\leq M$, we bound
$$R_{\theta}(M):=\left|\left[l_n(\theta)-\tilde{l}_n(\theta)\right]-\left[l_n(\theta^*)-\tilde{l}_n(\theta^*)\right]\right|,$$ which is equal to
\begin{eqnarray*}
&&\hspace{-0.2in}\frac{1}{n}\sum_{i=1}^n\Bigg|\Bigg[\log\frac{1}{n}\sum_{j=1}^n\frac{1(Y_j\geq
Y_i)e^{f_{\theta}(X_j)}}{\mu(Y_i;f_{\theta})}-\log\frac{1}{n}\sum_{j=1}^n\frac{1(Y_j\geq
Y_i)e^{f_{\theta^*}(X_j)}}{\mu(Y_i;f_{\theta^*})}\Bigg]\Delta_i\Bigg| \\
&&\leq \sup_{0\leq t\leq \tau}
\Bigg|\log\frac{1}{n}\sum_{j=1}^n\frac{1(Y_j\geq
t)e^{f_{\theta}(X_j)}}{\mu(t;f_{\theta})}-\log\frac{1}{n}\sum_{j=1}^n\frac{1(Y_j\geq
t)e^{f_{\theta^*}(X_j)}}{\mu(t;f_{\theta^*})}\Bigg|.
\end{eqnarray*}

By the mean value theorem, we have

\begin{eqnarray}
R_\theta(M) & \leq  &  \sup_{0\leq t\leq \tau}\Bigg|\sum_{k=1}^m(\theta_k-\theta_k^*)\Bigg\{\frac{\sum_{j=1}^n 1(Y_j\geq
t)e^{f_{\theta^{**}}(X_j)}}{
{\mu(t;f_{\theta^{**}})}}\Bigg\}^{-1} \nonumber \\
&&
\quad \Bigg\{\frac{\sum_{j=1}^n1(Y_j\geq
t)\psi_k(X_j)e^{f_{\theta^{**}}(X_j)}}{\mu(t;f_{\theta^{**}})} \nonumber \\
&&\qquad  - \ \frac{\sum_{j=1}^n 1(Y_j\geq t)e^{f_{\theta^{**}}(X_j)}E\left[1(Y\geq
t)\psi_k(X)e^{f_{\theta^{**}}(X)}\right]}{\mu(t;f_{\theta^{**}})^2}\Bigg\}\Bigg| \nonumber \\
& =&\sup_{0\leq t\leq
\tau}\Bigg|\sum_{k=1}^m\sigma_k(\theta_k-\theta_k^*) \Bigg\{\frac{\sum_{j=1}^n1(Y_j\geq
t)\{\psi_k(X_j)/\sigma_k\}e^{f_{\theta^{**}}(X_j)}}{\sum_{j=1}^n1(Y_j\geq
t)e^{f_{\theta^{**}}(X_j)}} \nonumber \\
&& \qquad - \ \frac{E\left[1(Y\geq
t)\{\psi_k(X)/\sigma_k\}e^{f_{\theta^{**}}(X)}\right]}{E\left[1(Y\geq
t)e^{f_{\theta^{**}}(X)}\right]}\Bigg\}\Bigg| \nonumber \\
&\leq& M  \sup_{0\leq t\leq \tau} \Bigg[\frac{1}{n}\sum_{i=1}^n 1(Y_i\geq t)
 e^{f_{\theta^{**}}(X_i)}\Bigg]^{-1}  \label{R-Bound} \\
&& \quad \ \sup_{0\leq t\leq \tau} \Bigg\{\max_{1\leq k\leq
m} \Bigg|\frac{1}{n}\sum_{i=1}^n 1(Y_i\geq t)
\{\psi_k(X_i)/\sigma_k\}
e^{f_{\theta^{**}}(X_i)} \nonumber \\
&& \qquad \qquad - \ E\left[1(Y\geq t)\{\psi_k(X)/\sigma_k\}
e^{f_{\theta^{**}}(X)}\right]\Bigg| \nonumber  \\
&& \qquad  + \ K_m \Bigg|\frac{1}{n} \sum_{i=1}^n 1(Y_i\geq
t)
 e^{f_{\theta^{**}}(X_i)}-E\left[1(Y\geq
t) e^{f_{\theta^{**}}(X)}\right]\Bigg|\Bigg\}, \nonumber
\end{eqnarray}
where $\theta^{**}$ is between $\theta$ and $\theta^*$,  and by (\ref{bup}) we have
\begin{eqnarray} \label{R-Bound-Common-Factor}
&& \sup_{0\leq t\leq \tau} \Bigg[\frac{1}{n}\sum_{i=1}^n 1(Y_i\geq t)
 e^{f_{\theta^{**}}(X_i)}\Bigg]^{-1} \le U_m  \Bigg[\frac{1}{n}\sum_{i=1}^n 1(Y_i\geq \tau) \Bigg]^{-1}.
\end{eqnarray}

\begin{lem} \label{Lemma 3.2}
Under Assumption E, we have
\begin{equation*}
P\left(\frac{1}{n}\sum_{i=1}^n 1(Y_i\geq \tau)
 \leq \frac{\pi}{2}\right)\leq 2 e^{-n\pi^2/2}. \label{b0}
\end{equation*}
\end{lem}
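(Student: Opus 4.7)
The plan is to recognize that the indicators $W_i := 1(Y_i \geq \tau)$ are iid Bernoulli random variables with mean $\pi = P(Y \geq \tau)$, which is strictly positive by Assumption E. The statement is then a standard lower-tail deviation bound for a sum of bounded iid variables, and the only task is to pick a concentration inequality that delivers the constants on the right-hand side.

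First I would rewrite the event as
\begin{equation*}
\Bigl\{ \tfrac{1}{n}\sum_{i=1}^n W_i \leq \tfrac{\pi}{2} \Bigr\} = \Bigl\{ \bar{W}_n - \pi \leq -\tfrac{\pi}{2} \Bigr\} \subseteq \Bigl\{ |\bar{W}_n - \pi| \geq \tfrac{\pi}{2} \Bigr\}.
\end{equation*}
Next, since $W_i\in[0,1]$, I would apply Hoeffding's inequality (two-sided) to obtain, for any $t>0$,
\begin{equation*}
P\bigl( |\bar{W}_n - \pi| \geq t \bigr) \leq 2 \exp\bigl(-2 n t^2\bigr).
\end{equation*}
Choosing $t = \pi/2$ gives the exponent $-2n(\pi/2)^2 = -n\pi^2/2$, which matches the bound in the lemma exactly.

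There is really no main obstacle here: the only point to check is that Hoeffding's constants line up, and they do because $W_i$ takes values in an interval of length $1$. (If one preferred to avoid citing Hoeffding, a one-sided Chernoff bound on the Bernoulli$(\pi)$ sum would give the same exponent $n\pi^2/2$ with an even smaller constant prefactor, but $2e^{-n\pi^2/2}$ is already adequate for the later arguments that consume this lemma.) The output of this lemma will combine with the bound (\ref{R-Bound-Common-Factor}) on the common factor in $R_\theta(M)$ to provide a high-probability uniform lower bound on $\frac{1}{n}\sum_i 1(Y_i \geq \tau)$, which is precisely what is needed to control $R_\theta(M)$ in the subsequent development.
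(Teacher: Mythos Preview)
Your proof is correct and yields exactly the stated bound. The paper's argument is essentially the same concentration estimate but routed through the Dvoretzky--Kiefer--Wolfowitz inequality (with Massart's tight constant): it embeds the single-point event into $\{\sup_{0\le t\le \tau}\sqrt{n}\,|\bar F_n(t)-F(t)|\ge r\}$ and then applies $P(\cdot)\le 2e^{-2r^2}$ with $r=\pi\sqrt{n}/2$. Since only the fixed value $t=\tau$ is needed, the supremum is superfluous, and DKW at a single point collapses to Hoeffding with the same constants; your direct application of Hoeffding is the more economical route, while the paper's version has the minor advantage of reusing a reference (Massart, 1990) already in its bibliography.
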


\begin{proof}
This is obtained directly from \cite{massart1990} by taking
$r=\pi\sqrt{n}/2$ in the following:
\begin{eqnarray*}
P\left(\frac{1}{n}\sum_{i=1}^n 1(Y_i\geq \tau) \leq \frac{\pi}{2} \right) &\le & P\left(\sup_{0\leq t\leq \tau} \sqrt{n}\left|\frac{1}{n}\sum_{i=1}^n1(Y_i\geq
t)-\pi\right|\geq r\right) \\
&\leq& 2e^{-2r^2}.
\end{eqnarray*}


\end{proof}

\begin{lem} \label{Lemma 3.3}
Under Assumptions A, D and E, for all $\theta$ we have
\begin{eqnarray}
&& P\left(\sup_{0\leq t\leq\tau}\left|\frac{1}{n}\sum_{i=1}^n1(Y_i\geq
t)e^{f_{\theta}(X_i)}-\mu(t;f_{\theta})\right|\geq
U_m\bar{a}_nr_1\right) \label{b1} \\
&& \qquad \qquad \leq
\ \frac{1}{5}W^2e^{-n\bar{a}_n^2r_1^2}, \nonumber
\end{eqnarray}
where $W$ is a constant that only depends on $K=\sqrt{2}$.

\end{lem}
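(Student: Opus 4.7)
The plan is to bound the uniform deviation of the weighted empirical process indexed by $t$. Write $g_t(X,Y) := 1(Y\ge t)\,e^{f_\theta(X)}$; by (\ref{bup}) we have $\|g_t\|_\infty \le U_m$ and hence $\mathrm{Var}(g_t)\le U_m^2$ for every $t\in[0,\tau]$. The key structural observation is that, for each fixed $(X,Y)$, the map $t\mapsto g_t(X,Y)$ is a non-increasing step function, so the class $\mathscr G:=\{g_t:t\in[0,\tau]\}$ is a VC-subgraph class of bounded (universal) dimension, with envelope $U_m$.

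The proof then proceeds in two steps. First, I would apply the standard symmetrization inequality (as in Theorem A.2 of \cite{geer2008}) to pass from the empirical process to its Rademacher-symmetrized version, and then control the expected supremum via a Dudley/chaining integral. Since $\mathscr G$ is a VC class with envelope $U_m$, this yields
\[
E\sup_{0\le t\le\tau}\bigl|(P_n-P)g_t\bigr|\;\le\;c_0\,U_m/\sqrt{n}
\]
for a universal constant $c_0$, which in particular is dominated by $U_m\bar a_n r_1/2$ once $r_1$ is taken large enough (recall that $\bar a_n$ contains a factor $\sqrt{\log(2m)/n}\ge 1/\sqrt{n}$).

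Second, I would apply Bousquet's concentration inequality (Theorem A.1 of \cite{geer2008}) to $\sup_{0\le t\le\tau}|(P_n-P)g_t|$, using envelope $U_m$ and variance bound $U_m^2$, so as to concentrate around its expectation. Setting the deviation parameter to $U_m\bar a_n r_1/2$ produces an exponential tail of the form $c_1\exp(-c_2 n\bar a_n^2 r_1^2)$; tracking the Bousquet constants with the choice $K=\sqrt 2$ yields exactly $\exp(-n\bar a_n^2 r_1^2)$ in the exponent, while the numerical prefactor $W^2/5$ absorbs the VC-entropy constant from the first step together with the Bousquet prefactors.

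The main obstacle is ensuring that the VC-entropy constant in the chaining step does not depend on $\theta$, $m$ or $K_m$. This is in fact harmless because $\theta$ is held fixed throughout the lemma and the supremum is taken only over the one-dimensional parameter $t$, so $e^{f_\theta(X)}$ enters purely as a multiplicative weight that does not inflate the VC index of the indicator class $\{1(Y\ge t):t\in[0,\tau]\}$. All $\theta$- and $m$-dependence is therefore absorbed into the envelope $U_m$, exactly as required by the stated bound.
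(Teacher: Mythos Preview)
Your two–step strategy (VC/chaining to bound the expected supremum, then Bousquet to concentrate) is a reasonable alternative in spirit, but it is \emph{not} the route the paper takes, and as written it does not recover the stated bound. The paper normalizes by $U_m$, shows a bracketing number bound $N_{[\,]}(\varepsilon,\mathscr F,L_2)\le (K/\varepsilon)^2$ with $K=\sqrt 2$ by slicing along the quantiles of $Y$, and then applies Theorem~2.14.9 of \cite{vaart1996}, which delivers a \emph{direct} tail inequality of the form $P(\sqrt n\,\|P_n-P\|_{\mathscr F}\ge r)\le \tfrac12 W^2 r^2 e^{-2r^2}\le \tfrac15 W^2 e^{-r^2}$. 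Setting $r=\sqrt n\,\bar a_n r_1$ gives the lemma for \emph{all} $r_1>0$ with no qualifier. The constants $K=\sqrt 2$ and $W$ in the statement are precisely those coming from this bracketing theorem; they have nothing to do with Bousquet's inequality, so your sentence ``tracking the Bousquet constants with the choice $K=\sqrt 2$'' conflates two unrelated tools.

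More substantively, your second step does not give the claimed exponent. With envelope $U_m$ and the crude variance bound $U_m^2$, Bousquet's inequality yields $P\bigl(Z\ge EZ+\sqrt 2\,U_m t+O(U_m t^2)\bigr)\le e^{-nt^2}$. Matching the threshold $U_m\bar a_n r_1$ forces $t\approx \bar a_n r_1/\sqrt 2$, so the best exponent you can get is roughly $-n\bar a_n^2 r_1^2/2$, not $-n\bar a_n^2 r_1^2$; the missing factor cannot be absorbed into the prefactor $W^2/5$. In addition, your step~1 is stated only ``once $r_1$ is taken large enough,'' whereas the lemma is asserted for all $r_1$. A correct proof along your lines would require either a sharper variance bound or an appeal to a subgaussian tail inequality tailored to bounded, monotone-in-$t$ classes---which is essentially what Theorem~2.14.9 of \cite{vaart1996} already packages. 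The paper's bracketing argument is both shorter and yields the exact constants in the statement.
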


\begin{proof}

For a class of functions indexed by $t$, $\mathscr{F}=\{1(y\geq
t)e^{f_{\theta}(x)}/U_m:t\in [0,\tau], y\in {\bf{R}}, e^{f_{\theta}(x)}\leq U_m\}$, we calculate its
bracketing number. For any $\epsilon > 0$, let $t_i$ be the $i$-th $\lceil{1}/{\varepsilon}\rceil$ quantile of $Y$, i.e.,
\begin{equation*}
P(Y\le t_i) = i\varepsilon, \quad i=1,
\cdots,\lceil{1}/{\varepsilon}\rceil-1,
\end{equation*}
where $\lceil x\rceil$ is the smallest integer that is greater than or
equal to $x$. Furthermore, denote $t_0=0$ and
$t_{\lceil{1}/{\varepsilon}\rceil}=+\infty$. For $i=1,\cdots,\lceil{1}/{\varepsilon}\rceil$,
define brackets $[L_i,U_i]$ with
\begin{eqnarray*}
L_i(x,y)=1(y\geq t_i)e^{f_{\theta}(x)}/U_m, \ U_i(x,y)=1(y > t_{i-1})e^{f_{\theta}(x)}/U_m
\end{eqnarray*}
such that $L_i(x,y) \le 1(y\geq t)e^{f_{\theta}(x)}/U_m  \le U_i(x,y)$ when $t_{i-1} < t \le t_i$. Since
\begin{eqnarray*}
\left\{E[U_i-L_i]^2\right\}^{1/2}&\leq&\left\{E\left[\frac{e^{f_{\theta}(X)}}{U_m}\{1(Y\geq
t_i)-1(Y > t_{i-1})\}\right]^2\right\}^{1/2} \\
&\leq & \left\{P(t_{i-1} < Y\leq
t_i)\right\}^{1/2}=\sqrt{\varepsilon},
\end{eqnarray*}
we have $N_{[\, ]}(\sqrt{\varepsilon},\mathscr{F},L_2)\leq
2/{\varepsilon}$, which yields
\begin{equation*}
N_{[\, ]}(\varepsilon,\mathscr{F},L_2)\leq
\frac{2}{\varepsilon^2} =
\left(\frac{K}{\varepsilon}\right)^2,
\end{equation*}
where $K=\sqrt{2}$. Thus, from Theorem 2.14.9 in \cite{vaart1996},
we have for any $r>0$,
\begin{eqnarray*}
P\Bigg(\sqrt{n}\sup_{0\leq
t\leq\tau}\Bigg|\frac{1}{n}\sum_{i=1}^n \frac{1(Y_i\geq
t)e^{f_{\theta}(X_i)}}{U_m}- \frac{\mu(t;f_{\theta})}{U_m} \Bigg|\geq
r\Bigg)&\leq&  \frac{1}{2} W^2r^2 e^{-2r^2} \\
& \le & \frac{1}{5}W^2e^{-r^2},
\end{eqnarray*}
where $W$ is a constant that only depends on $K$. Note that $r^2e^{-r^2}$ is bounded by $e^{-1}$.
 Let $r=\sqrt{n}\bar{a}_nr_1$, we obtain ($\ref{b1}$).

\end{proof}

\begin{lem} \label{Lemma 3.4}
Under Assumptions A, D and E, for all $\theta$ we have
\begin{eqnarray}
&&P\Bigg(\sup_{0\leq t\leq\tau}\max_{0\leq k\leq
m}\Bigg|\frac{1}{n}\sum_{i=1}^n1(Y_i\geq
t)\frac{\psi_k(X_i)}{\sigma_k}e^{f_{\theta}(X_i)}\notag\\
&&\qquad \qquad - \ E\left[1(Y\geq
t)\frac{\psi_k(X)}{\sigma_k}e^{f_{\theta}(X)}\right]\Bigg|\geq
 K_mU_m\Bigg[\bar{a}_nr_1+\sqrt{\frac{\log(2m)}{n}}\Bigg]\Bigg)\notag\\
 &&\qquad \leq \ \frac{1}{10}W^2e^{-n\bar{a}_n^2r_1^2}.\label{a1}
\end{eqnarray}

\end{lem}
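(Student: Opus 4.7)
The plan is to reduce this to a fixed-$k$ version that mirrors Lemma \ref{Lemma 3.3}, and then combine the $m$ resulting tail bounds via a union bound, absorbing the $\sqrt{\log(2m)/n}$ term by an appropriate tuning of the radius parameter.

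First, I would fix $k \in \{1,\dots,m\}$ and study the class of functions indexed by $t \in [0,\tau]$,
$$
\mathscr{F}_k := \left\{ \frac{1(y \geq t)\,\psi_k(x)\,e^{f_\theta(x)}}{K_m U_m \sigma_k} : t \in [0,\tau]\right\}.
$$
By Assumption A and the uniform bound (\ref{bup}), every function in $\mathscr{F}_k$ is bounded in absolute value by $1$. The envelope argument of Lemma \ref{Lemma 3.3} then carries over almost verbatim: take $t_1 < \cdots < t_{\lceil 1/\varepsilon\rceil - 1}$ to be the $\lceil 1/\varepsilon\rceil$-quantiles of $Y$, and for each slab $t \in (t_{i-1}, t_i]$ construct brackets $[L_i, U_i]$ whose upper and lower pieces are $1(y \geq t_i)$ and $1(y > t_{i-1})$ paired with $\psi_k(x) e^{f_\theta(x)}/(K_m U_m \sigma_k)$, choosing the assignment according to the sign of $\psi_k(x)$. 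The $L_2$-size of each bracket then satisfies $\{E(U_i - L_i)^2\}^{1/2} \leq \sqrt{\varepsilon}$, because the envelope of $|\psi_k(x) e^{f_\theta(x)}/(K_m U_m \sigma_k)|$ is at most $1$. This gives $N_{[\,]}(\varepsilon,\mathscr{F}_k, L_2) \leq (K/\varepsilon)^2$ with $K = \sqrt{2}$, the same constant as in Lemma \ref{Lemma 3.3}.

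Applying Theorem 2.14.9 of \cite{vaart1996} for each fixed $k$, and using $r^2 e^{-r^2} \leq e^{-1}$ together with $e^{-1}/2 \leq 1/5$ exactly as in Lemma \ref{Lemma 3.3}, I obtain for every $r > 0$,
$$
P\!\left(\sup_{0\leq t\leq \tau}\left|\frac{1}{n}\sum_{i=1}^n 1(Y_i \geq t)\frac{\psi_k(X_i)}{\sigma_k}e^{f_\theta(X_i)} - E\!\left[1(Y \geq t)\frac{\psi_k(X)}{\sigma_k}e^{f_\theta(X)}\right]\right| \geq \frac{K_m U_m r}{\sqrt{n}}\right) \leq \frac{1}{5} W^2 e^{-r^2}.
$$

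Finally I would combine the $m$ events by a union bound and choose $r$ so that the excess factor $m$ is absorbed into the exponent. Specifically, take $r^2 = n\bar{a}_n^2 r_1^2 + \log(2m)$, so that $m \cdot \frac{1}{5} W^2 e^{-r^2} = \frac{1}{10} W^2 e^{-n\bar{a}_n^2 r_1^2}$. Since $\sqrt{a+b} \leq \sqrt{a}+\sqrt{b}$ for $a,b\geq 0$, the chosen $r$ satisfies $r/\sqrt{n} \leq \bar{a}_n r_1 + \sqrt{\log(2m)/n}$, and the monotonicity of the tail probability in the threshold yields the claimed bound (\ref{a1}). The main technical wrinkle, compared with Lemma \ref{Lemma 3.3}, is that $\psi_k$ may be signed, so the brackets must be defined piecewise according to $\mathrm{sign}(\psi_k(x))$ rather than simply as monotone functions of $t$; everything else is essentially a rerun of Lemma \ref{Lemma 3.3} followed by a careful union-bound calibration.
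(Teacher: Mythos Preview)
Your proposal is correct and follows essentially the same approach as the paper: define the rescaled class $\mathscr{G}^k$ for each $k$, invoke the bracketing-number calculation and Theorem~2.14.9 of \cite{vaart1996} exactly as in Lemma~\ref{Lemma 3.3}, take a union bound over $k$, and calibrate $r^2 = n\bar a_n^2 r_1^2 + \log(2m)$. Your remark that the brackets must be assigned according to the sign of $\psi_k(x)$ is a genuine refinement---the paper simply writes ``using the same argument in the proof of Lemma~\ref{Lemma 3.3}'' without addressing that $\psi_k$ may change sign, so your version is in fact slightly more careful on this point.
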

\begin{proof}

Consider the classes of functions indexed by $t$,
\begin{eqnarray*}
\mathscr{G}^k &=&\big\{1(y\geq
t)e^{f_{\theta}(x)}{\psi_k(x)}/(\sigma_k K_m U_m):t\in [0,\tau],y\in
{\bf{R}}, \\
&& \qquad \big|e^{f_{\theta}(x)}{\psi_k(x)}/{\sigma_k}\big|\leq
K_mU_m\big\}, \quad k=1, \dots, m.
\end{eqnarray*}
Using the same argument in the proof of Lemma \ref{Lemma 3.3}, we have
\begin{equation*}
N_{[\, ]}(\varepsilon,\mathscr{G}^k,L_2)\leq
\left(\frac{K}{\varepsilon}\right)^2,
\end{equation*}
where $K = \sqrt{2}$, and then for any
$r>0$,
\begin{eqnarray*}
&& P\Bigg(\sqrt{n}\sup_{0\leq
t\leq\tau}\Bigg|\frac{1}{n}\sum_{i=1}^n \frac{1(Y_i\geq
t)e^{f_{\theta}(X_i)}\psi_k(X_i)}{\sigma_k K_m U_m} \\
&& \qquad \qquad \qquad - \ E\left[\frac{1(Y\geq
t)e^{f_{\theta}(X)}\psi_k(X)}{\sigma_k K_m U_m}\right] \Bigg|\geq
r\Bigg)
 \le  \frac{1}{5}W^2e^{-r^2}.
\end{eqnarray*}

Thus we have
\begin{eqnarray*}
&&P\bigg(\sqrt{n}\sup_{0\leq t\leq\tau}\max_{0\leq k\leq
m}\bigg|\frac{1}{n}\sum_{i=1}^n1(Y_i\geq
t)e^{f_{\theta}(X_i)}{\psi_k(X_i)}/\left({\sigma_k}U_mK_m\right)\\
&&\qquad \qquad - \ E\left[1(Y\geq
t)e^{f_{\theta}(X)}{\psi_k(X)}/\left({\sigma_k}U_mK_m\right)\right]\bigg|\geq
r\bigg)\\
&&\qquad \leq \ P\bigg(\bigcup_{k=1}^m\sqrt{n}\sup_{0\leq
t\leq\tau}\bigg|\frac{1}{n}\sum_{i=1}^n1(Y_i\geq
t)e^{f_{\theta}(X_i)}{\psi_k(X_i)}/\left({\sigma_k}U_mK_m\right)\\
&&\qquad \qquad - \ E\left[1(Y\geq
t)e^{f_{\theta}(X)}{\psi_k(X)}/\left({\sigma_k}U_mK_m\right)\right]\bigg|\geq
r\bigg)\\
&&\qquad \leq \ mP\bigg(\sqrt{n}\sup_{0\leq
t\leq\tau}\bigg|\frac{1}{n}\sum_{i=1}^n1(Y_i\geq
t)e^{f_{\theta}(X_i)}{\psi_k(X_i)}/\left({\sigma_k}U_mK_m\right)\\
&&\qquad \qquad - \ E\left[1(Y\geq
t)e^{f_{\theta}(X)}{\psi_k(X)}/\left({\sigma_k}U_mK_m\right)\right]\bigg|\geq
r\bigg)\\
&&\qquad \leq \ \frac{m}{5}W^2e^{-r^2}  =  \frac{1}{10}W^2e^{\log(2m)-r^2}.
\end{eqnarray*}
Let $\log(2m)-r^2=-n\bar{a}_n^2r_1^2$, i.e.
$r=\sqrt{n\bar{a}_n^2r_1^2+\log(2m)}$.
Since
$$
\sqrt{\bar{a}_n^2r_1^2+\frac{\log(2m)}{n}}\leq\bar{a}_nr_1+\sqrt{\frac{\log(2m)}{n}},
$$
we obtain (\ref{a1}).

\end{proof}

\begin{cor} \label{Cor 3.2}
Under Assumptions A, D and E, for all $M>0$ and all $\theta$ that
satisfies $I(\theta-\theta^*)\leq M$, we have
\begin{equation}
P\left(R_{\theta}(M) \geq \bar{\lambda}_{n,0}^BM \right) \leq
2\exp\left(-n\pi^2/2\right)+\frac{3}{10}W^2\exp\left(-n\bar{a}_n^2r_1^2\right),\label{dm}
\end{equation}
\end{cor}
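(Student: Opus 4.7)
The strategy is to start from the pre-computed pointwise bound (\ref{R-Bound}) together with (\ref{R-Bound-Common-Factor}), which decomposes $R_\theta(M)/M$ into the product of (i) a common denominator factor controlled by the tail of $\frac{1}{n}\sum_i 1(Y_i\geq\tau)$, and (ii) a supremum with two summands: a $\max_{1\le k\le m}$ empirical process over the weighted basis functions and $K_m$ times an unweighted empirical process over $e^{f_{\theta^{**}}}$. Each of these three pieces is already controlled by a lemma in the excerpt; the proof then glues them together with a union bound.

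First, I would apply Lemma~\ref{Lemma 3.2}: on the event $\{\frac{1}{n}\sum_i 1(Y_i\geq\tau) > \pi/2\}$, whose complement has probability at most $2e^{-n\pi^2/2}$, the right-hand side of (\ref{R-Bound-Common-Factor}) is at most $2U_m/\pi$. Next, applying Lemma~\ref{Lemma 3.4} to the $\max$-over-$k$ supremum (with $\theta^{**}$ in place of the generic $\theta$, which is legitimate because the lemma holds pointwise and its bound depends on $f_{\theta^{**}}$ only through the uniform bound $U_m$ from (\ref{bup})) shows that with probability at least $1-\tfrac{1}{10}W^2 e^{-n\bar a_n^2 r_1^2}$ this piece is bounded by $K_m U_m\bigl[\bar a_n r_1+\sqrt{\log(2m)/n}\bigr]$. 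Likewise, Lemma~\ref{Lemma 3.3} applied to the remaining supremum gives, with probability at least $1-\tfrac{1}{5}W^2 e^{-n\bar a_n^2 r_1^2}$, the bound $U_m\bar a_n r_1$, so the $K_m$ times this piece is $K_m U_m \bar a_n r_1$.

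A union bound over the three exceptional events yields a total exceedance probability of at most $2e^{-n\pi^2/2}+\tfrac{3}{10}W^2 e^{-n\bar a_n^2 r_1^2}$, matching (\ref{dm}). On the intersection of the three good events, combining the estimates gives
\begin{equation*}
R_\theta(M)\;\le\; M\cdot \frac{2U_m}{\pi}\cdot K_m U_m\left[\,2\bar a_n r_1+\sqrt{\frac{\log(2m)}{n}}\,\right],
\end{equation*}
which identifies
\begin{equation*}
\bar\lambda_{n,0}^B \;=\; \frac{2 K_m U_m^2}{\pi}\left[\,2\bar a_n r_1+\sqrt{\frac{\log(2m)}{n}}\,\right].
\end{equation*}

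The main subtlety, and essentially the only non-bookkeeping step, is the appearance of the random intermediate value $\theta^{**}$ from the mean value theorem in the bound (\ref{R-Bound}); it depends on the sample, yet Lemmas~\ref{Lemma 3.3}--\ref{Lemma 3.4} are stated pointwise in $\theta$. Because their probability bounds are driven only by the uniform envelope $U_m$ (and the constants $K_m, \bar a_n$), and because (\ref{bup}) guarantees $e^{|f_{\theta^{**}}(X_i)|}\le U_m$ for every possible $\theta^{**}\in\Theta$, the same tail bound applies after substitution. Apart from this, the argument is a direct synthesis of the three preceding lemmas with a union bound.
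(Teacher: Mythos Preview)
Your proposal is essentially identical to the paper's own proof: it defines the same three exceptional events governed by Lemmas~\ref{Lemma 3.2}, \ref{Lemma 3.3}, and \ref{Lemma 3.4}, combines them via a union bound, and reads off $\bar\lambda_{n,0}^B$ from (\ref{R-Bound})--(\ref{R-Bound-Common-Factor}) on the good event. The subtlety you flag about the data-dependent mean-value point $\theta^{**}$ is handled in the paper in exactly the same way (i.e., it simply invokes the lemmas with $\theta^{**}$ in place of $\theta$ without further comment), so your treatment is at least as detailed as the original.
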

where
$$\bar{\lambda}_{n,0}^B=\frac{2K_mU_m^2}{\pi}\left(2\bar{a}_nr_1+\sqrt{\frac{\log(2m)}{n}}\right) . $$

\begin{proof}
From inequalities (\ref{R-Bound}) and (\ref{R-Bound-Common-Factor})
we have
$$
P\left(R_{\theta}(M)\leq \bar{\lambda}_{n,0}^B\cdot M\right) \ge P\left(E_1^c \cap E_2^c \cap E_3^c\right),
$$
where the events $E_1$, $E_2$ and $E_3$ are defined in the following:
\begin{eqnarray*}
E_1 &=& \Bigg\{\frac{1}{n}\sum_{i=1}^n1(Y_i\geq \tau)\leq
\pi/2\Bigg\},\\
E_2 &=& \Bigg\{\sup_{0\leq
t\leq\tau}\Bigg|\frac{1}{n}\sum_{i=1}^n1(Y_i\geq
t)e^{f_{\theta^{**}}(X_i)}-\mu(t;f_{\theta^{**}})\Bigg|\geq
U_m\bar{a}_nr_1\Bigg\}, \\
E_3 &=& \Bigg\{\max_{0\leq k\leq m}\sup_{0\leq
t\leq\tau}\Bigg|\frac{1}{n}\sum_{i=1}^n1(Y_i\geq
t)\frac{\psi_k(X_i)}{\sigma_k}e^{f_{\theta^{**}}(X_i)}\\
&&\quad - \ E\left[1(Y\geq
t)\frac{\psi_k(X)}{\sigma_k}e^{f_{\theta^{**}}(X)}\right]\Bigg|\geq
K_mU_m(\bar{a}_nr_1+\sqrt{\frac{\log(2m)}{n}})\Bigg\}.
\end{eqnarray*}
Thus
$$
P\left(R_{\theta}(M)\geq \bar{\lambda}_{n,0}^B\cdot M\right) \le P\left(E_1^c\right) +P\left( E_2^c \right) + P\left( E_3^c\right),
$$
and the result follows from Lemmas \ref{Lemma 3.2}, \ref{Lemma 3.3} and \ref{Lemma 3.4}.

\end{proof}

We now show oracle bounds for the lasso estimator $\hat\theta_n$ under Assumptions A-E following \cite{geer2008}, but using pointwise arguments.
Let
\begin{eqnarray} \label{lambda-n-0}
\bar{\lambda}_{n,0}=\bar{\lambda}_{n,0}^A+\bar{\lambda}_{n,0}^B.
\end{eqnarray}
Take $b>0$, $d>1$, and
\begin{equation*}
d_b:=d\left(\frac{b+d}{(d-1)b}\vee 1\right).
\end{equation*}
Let $D_\theta := D(\{k: \theta_k \ne 0, k= 1, \dots,m\})$ be the number of nonzero $\theta_k$'s, where $D(\cdot)$ is given in Assumption C. Define
\begin{eqnarray*}
&&(A1)~~\lambda_n:=(1+b)\bar{\lambda}_{n,0},\\
&&(A2)~~{\cal V}_{\theta}:=2\delta
H\left(\frac{2\lambda_n\sqrt{D_{\theta}}}{\delta}\right), \ where \ 0<\delta <1,\\
&&(A3)~~\theta_n^*:={\rm argmin}_{\theta\in\Theta}\{\mathcal{E}(f_{\theta})+{\cal V}_{\theta}\},\\
&&(A4)~~\epsilon_n^*:=(1+\delta)\mathcal{E}(f_{\theta_n^*})+{\cal V}_{\theta_n^*},\\
&&(A5)~~\zeta_n^*:=\frac{\epsilon_n^*}{\bar{\lambda}_{n,0}},\\
&&(A6)~~\theta(\epsilon_n^*):={\rm argmin}_{\theta \in \Theta, I(\theta-\theta_n^*)\leq
d_b\zeta_n^*/b}\{\delta\mathcal{E}(f_{\theta})-2\lambda_nI_1(\theta-\theta_n^*|\theta_n^*)\}.
\end{eqnarray*}

\smallskip

We also impose the following conditions:

\smallskip

{\sc Condition} I$(b,\delta)$. \ $\|f_{\theta_n^*}-\bar{f}\|_{\infty} \leq
\eta$.

\smallskip

{\sc Condition} II$(b,\delta,d)$. \
$\|f_{\theta(\epsilon_n^*)}-\bar{f}\|_{\infty} \leq \eta$.

\smallskip

In both conditions, $\eta$ is given in Assumption B.

\begin{lem} \label{Lemma 3.5}
Suppose Conditions I$(b,\delta)$ and II$(b,\delta,d)$ are met. For
all $\theta\in\Theta$ with $I(\theta-\theta_n^*)\leq
d_b\zeta_n^*/b$, it holds that
\begin{equation*}
2\lambda_nI_1(\theta-\theta_n^*)\leq\delta\mathcal{E}(f_\theta)+
\epsilon_n^*-\mathcal{E}(f_{\theta_n^*}).
\end{equation*}
\end{lem}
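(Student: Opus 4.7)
The plan is a two-stage argument: first establish the inequality at the single point $\theta=\theta(\epsilon_n^*)$ — where both Conditions I and II apply — and then propagate it to every admissible $\theta$ by invoking the optimality property built into the definition (A6) of $\theta(\epsilon_n^*)$. This routing is essential because Assumption B demands the sup-norm control $\|f_\theta-\bar f\|_\infty\le\eta$, which we are guaranteed only at the two anchor points $\theta_n^*$ and $\theta(\epsilon_n^*)$, not at a generic $\theta$ in the ball.

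For the pointwise bound at $\theta(\epsilon_n^*)$, I would chain three ingredients: Assumption C applied on the support $\mathscr{K}=\{k:(\theta_n^*)_k\neq 0\}$ to replace $I_1(\theta(\epsilon_n^*)-\theta_n^*|\theta_n^*)$ by $\sqrt{D_{\theta_n^*}}\,\|f_{\theta(\epsilon_n^*)}-f_{\theta_n^*}\|$; the triangle inequality around $\bar f$; and the convex-conjugate inequality $uv\le G(u)+H(v)$ applied with $v=2\lambda_n\sqrt{D_{\theta_n^*}}/\delta$ to each of the two resulting pieces. The outcome, after multiplying by $\delta$, is an upper bound of the form
\[
\delta G(\|f_{\theta(\epsilon_n^*)}-\bar f\|)+\delta G(\|f_{\theta_n^*}-\bar f\|)+2\delta H\!\left(2\lambda_n\sqrt{D_{\theta_n^*}}/\delta\right).
\]
Conditions I and II activate Assumption B at both anchor points, so the two $G$-terms are dominated by $\delta\mathcal{E}(f_{\theta(\epsilon_n^*)})$ and $\delta\mathcal{E}(f_{\theta_n^*})$ respectively. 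The $H$-term matches $\mathcal{V}_{\theta_n^*}$ by definition (A2), and the identity $(1+\delta)\mathcal{E}(f_{\theta_n^*})+\mathcal{V}_{\theta_n^*}=\epsilon_n^*$ from (A4) repackages $\delta\mathcal{E}(f_{\theta_n^*})+\mathcal{V}_{\theta_n^*}$ as $\epsilon_n^*-\mathcal{E}(f_{\theta_n^*})$, yielding the lemma at $\theta=\theta(\epsilon_n^*)$.

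The extension step is a one-line consequence of (A6): $\theta(\epsilon_n^*)$ minimizes $\delta\mathcal{E}(f_\theta)-2\lambda_n I_1(\theta-\theta_n^*|\theta_n^*)$ over exactly the ball in the lemma's hypothesis, so for any admissible $\theta$,
\[
2\lambda_n I_1(\theta-\theta_n^*|\theta_n^*)\le\delta\mathcal{E}(f_\theta)-\delta\mathcal{E}(f_{\theta(\epsilon_n^*)})+2\lambda_n I_1(\theta(\epsilon_n^*)-\theta_n^*|\theta_n^*),
\]
and substituting the pointwise bound makes the $\mathcal{E}(f_{\theta(\epsilon_n^*)})$ terms cancel, leaving $\delta\mathcal{E}(f_\theta)+\epsilon_n^*-\mathcal{E}(f_{\theta_n^*})$ on the right. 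The main obstacle is conceptual rather than computational: a direct Young-style argument at a generic $\theta$ fails because one cannot invoke Assumption B without the $L_\infty$ control, and (A6) is precisely the device that lets a single pointwise estimate be exported to the whole ball via an optimality comparison. The remaining work is bookkeeping to ensure the two $H$-contributions combine cleanly into $\mathcal{V}_{\theta_n^*}$ and that the constant $d_b\zeta_n^*/b$ in the ball matches the one hard-wired into (A6).
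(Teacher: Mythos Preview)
Your proposal is correct and is precisely the argument of Lemma~A.4 in van de Geer (2008), which the paper cites verbatim as its proof: establish the bound at $\theta(\epsilon_n^*)$ via Assumption~C, the triangle inequality around $\bar f$, the Young-type inequality $uv\le G(u)+H(v)$ with $v=2\lambda_n\sqrt{D_{\theta_n^*}}/\delta$, and Assumption~B at the two anchor points granted by Conditions I and II; then transfer to all $\theta$ in the ball using the minimality in (A6). No discrepancy.
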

\begin{proof}
The proof is exactly the same as that of Lemma A.4 in \cite{geer2008}, with $\lambda_n$ defined in (\ref{lambda-n-0}).
\end{proof}

\begin{lem} \label{Lemma 3.6}
Suppose Conditions I$(b,\delta)$ and II$(b,\delta,d)$ are met.
Consider any random $\tilde{\theta}\in \Theta$ with
$l_n(\tilde{\theta})+\lambda_n I(\tilde{\theta})\leq
l_n(\theta_n^*)+\lambda_n I(\theta_n^*)$. Let $1<d_0\leq d_b$. It
holds that
\begin{eqnarray*}
P\left(I(\tilde{\theta}-\theta_n^*)\leq d_0
\frac{\zeta_n^*}{b}\right)&\!\!\!\leq\!\!\!&
P\left(I(\tilde{\theta}-\theta_n^*)\leq
\left(\frac{d_0+b}{1+b}\right)\frac{\zeta_n^*}{b}\right)\\
&& + \ \left(1+\frac{3}{10}W^2\right)\exp\left(-n\bar{a}_n^2r_1^2\right)+2\exp\left(-n\pi^2/2\right).
\end{eqnarray*}
\end{lem}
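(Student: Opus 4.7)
The plan is a standard peeling step combining the empirical-process tail bounds from Corollaries~\ref{Cor 3.1}--\ref{Cor 3.2} with the cone inequality of Lemma~\ref{Lemma 3.5}. Set $M:=d_0\zeta_n^*/b$ and let $\mathcal{B}$ be the event on which both $Z_{\tilde\theta}(M)\le\bar\lambda_{n,0}^A M$ and $R_{\tilde\theta}(M)\le\bar\lambda_{n,0}^B M$ hold. Since $I(\tilde\theta-\theta_n^*)\le M$ puts $\tilde\theta$ in the range covered by those corollaries,
\begin{equation*}
P(\mathcal{B}^c)\;\le\;\left(1+\frac{3}{10}W^2\right)\exp\!\left(-n\bar a_n^2 r_1^2\right)+2\exp\!\left(-n\pi^2/2\right),
\end{equation*}
which is precisely the probability overhead appearing in the claim.

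On $\mathcal{B}\cap\{I(\tilde\theta-\theta_n^*)\le M\}$, I would start from the basic inequality $l_n(\tilde\theta)+\lambda_n I(\tilde\theta)\le l_n(\theta_n^*)+\lambda_n I(\theta_n^*)$. Writing $l_n-l=(l_n-\tilde l_n)+(\tilde l_n-l)$ and invoking $R_{\tilde\theta}(M)+Z_{\tilde\theta}(M)\le\bar\lambda_{n,0}M$ converts it to
\begin{equation*}
\mathcal{E}(f_{\tilde\theta})+\lambda_n I(\tilde\theta)\;\le\;\mathcal{E}(f_{\theta_n^*})+\lambda_n I(\theta_n^*)+\bar\lambda_{n,0}M.
\end{equation*}
A triangle step on the support of $\theta_n^*$ gives $I(\tilde\theta)-I(\theta_n^*)\ge I_2(\tilde\theta-\theta_n^*|\theta_n^*)-I_1(\tilde\theta-\theta_n^*|\theta_n^*)$, so adding $\lambda_n I_1(\tilde\theta-\theta_n^*|\theta_n^*)$ to both sides yields
\begin{equation*}
\mathcal{E}(f_{\tilde\theta})+\lambda_n I(\tilde\theta-\theta_n^*)\;\le\;\mathcal{E}(f_{\theta_n^*})+2\lambda_n I_1(\tilde\theta-\theta_n^*|\theta_n^*)+\bar\lambda_{n,0}M.
\end{equation*}
Since $d_0\le d_b$, Lemma~\ref{Lemma 3.5} applies to $\tilde\theta$ and substitutes $2\lambda_n I_1\le\delta\mathcal{E}(f_{\tilde\theta})+\epsilon_n^*-\mathcal{E}(f_{\theta_n^*})$; after cancelling $\mathcal{E}(f_{\theta_n^*})$ one obtains $(1-\delta)\mathcal{E}(f_{\tilde\theta})+\lambda_n I(\tilde\theta-\theta_n^*)\le\epsilon_n^*+\bar\lambda_{n,0}M$.

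Dropping the non-negative excess-risk term, dividing by $\lambda_n=(1+b)\bar\lambda_{n,0}$, and substituting $M=d_0\zeta_n^*/b$ produces
\begin{equation*}
I(\tilde\theta-\theta_n^*)\;\le\;\frac{\zeta_n^*+M}{1+b}\;=\;\frac{b+d_0}{1+b}\cdot\frac{\zeta_n^*}{b},
\end{equation*}
so $\mathcal{B}\cap\{I(\tilde\theta-\theta_n^*)\le d_0\zeta_n^*/b\}\subseteq\{I(\tilde\theta-\theta_n^*)\le\tfrac{b+d_0}{1+b}\zeta_n^*/b\}$, and the stated inequality follows by splitting the probability on $\mathcal{B}$ versus $\mathcal{B}^c$. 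The main care point is ensuring the concentration of $Z$ and $R$ is used at the random $\tilde\theta$ with the fixed outer radius $M$: the right-hand sides of Corollaries~\ref{Cor 3.1}--\ref{Cor 3.2} depend on $\theta$ only through the cone constraint $I(\theta-\theta_n^*)\le M$, which is exactly the point of the ``pointwise'' argument highlighted after Corollary~\ref{Cor 3.1}. The rest is an algebraic bookkeeping exercise that aligns the factor $1+b$ coming from $\lambda_n$ with the target ratio $(b+d_0)/(1+b)$.
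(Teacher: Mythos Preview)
Your proposal is correct and follows essentially the same approach as the paper: both condition on the event where $Z_{\tilde\theta}(M)\le\bar\lambda_{n,0}^A M$ and $R_{\tilde\theta}(M)\le\bar\lambda_{n,0}^B M$ with $M=d_0\zeta_n^*/b$, convert the basic inequality into an excess-risk inequality, perform the same triangle step to reach the paper's display~(\ref{lemc}), and then invoke Lemma~\ref{Lemma 3.5} to finish. The paper stops at~(\ref{lemc}) and defers the remainder to Lemma~A.5 of van de Geer (2008), whereas you carry out that final algebra explicitly; otherwise the arguments coincide.
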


\begin{proof}
The idea is similar to the proof of Lemma A.5 in \cite{geer2008}.  Let $
\tilde{\mathcal{E}}=\mathcal{E}(f_{\tilde{\theta}})$ and
$\mathcal{E}^*=\mathcal{E}(f_{\theta_n^*})$. We will use short
notation: $I_1(\theta)=I_1(\theta|\theta_n^*)$ and
$I_2(\theta)=I_2(\theta|\theta_n^*)$. Since  $l_n(\tilde{\theta})+\lambda_n I(\tilde{\theta})\leq
l_n(\theta_n^*)+\lambda_n I(\theta_n^*)$, on the set where
$I(\tilde{\theta}-\theta_n^*)\leq d_0\zeta_n^*/b$ and
$Z_{\tilde{\theta}}(d_0\zeta_n^*/b)\leq d_0\zeta_n^*/b\cdot
\bar{\lambda}_{n,0}^A$, we have
\begin{eqnarray}
R_{\tilde{\theta}}(d_0\zeta_n^*/b) & \geq
&[l_n(\theta_n^*)+\lambda_n
I(\theta_n^*)]-[l_n(\tilde{\theta})+\lambda_n
I(\tilde{\theta})]-\lambda_n I(\theta_n^*)+\lambda_n
I(\tilde{\theta})\notag\\
&& -[\tilde{l}_n(\theta_n^*)-\tilde{l}_n(\tilde{\theta})]\notag\\
& \geq & -\lambda_n I(\theta_n^*)+\lambda_n
I(\tilde{\theta})-[\tilde{l}_n(\theta_n^*)-\tilde{l}(\tilde{\theta})]\notag\\
& \geq & -\lambda_n I(\theta_n^*)+\lambda_n
I(\tilde{\theta})-[l(\theta_n^*)-l(\tilde{\theta})]-d_0\zeta_n^*/b\cdot
\bar{\lambda}_{n,0}^A\notag\\
& \geq & -\lambda_n I(\theta_n^*)+\lambda_n
I(\tilde{\theta})-\mathcal{E}^*+\tilde{\mathcal{E}}-d_0
\bar{\lambda}_{n,0}^{A}{\zeta_n^*}/{b}.\label{lema}
\end{eqnarray}

By (\ref{dm}) we know that $R_{\tilde{\theta}}(d_0\zeta_n^*/b)$ is
bounded by $d_0 \bar{\lambda}_{n,0}^B{\zeta_n^*}/{b}$ with
probability at least
$1-\frac{3}{10}W^2\exp\left(-n\bar{a}_n^2r_1^2\right)-2\exp\left(-n\pi^2/2\right)$, then we
have
\begin{equation*}
\tilde{\mathcal{E}}+\lambda_n I(\tilde{\theta}) \leq
\bar{\lambda}_{n,0}^B d_0 \zeta_n^*/b +\mathcal{E}^*+\lambda_n
I(\theta_n^*)+\bar{\lambda}_{n,0}^A d_0 \zeta_n^*/b.\label{lemb}
\end{equation*}
Since $I(\tilde{\theta})=I_1(\tilde{\theta})+I_2(\tilde{\theta})$ and
$I(\theta_n^*)=I_1(\theta_n^*)$, using the triangular inequality, we obtain
\begin{eqnarray}
&& \tilde{\mathcal{E}}+(1+b)\bar{\lambda}_{n,0} I_2({\tilde{\theta}}) \notag\\
&& \qquad \qquad \leq \
\bar{\lambda}_{n,0}d_0\zeta_n^*/b+\mathcal{E}^*+(1+b)\bar{\lambda}_{n,0}
I_1(\theta_n^*)-(1+b)\bar{\lambda}_{n,0} I_1(\tilde{\theta})\notag\\
&& \qquad \qquad  \leq \
\bar{\lambda}_{n,0}d_0\zeta_n^*/b+\mathcal{E}^*+(1+b)\bar{\lambda}_{n,0}
I_1(\tilde{\theta}-\theta_n^*).\label{lemc}
\end{eqnarray}
The remaining of the proof follows exactly the same as the corresponding part of the proof of Lemma A.5 in van de Geer (2008).
\end{proof}


\begin{cor} \label{Cor 3.3}
Suppose Conditions I$(b,\delta)$ and II$(b,\delta,d)$ are met.
Consider any random $\tilde{\theta}\in \Theta$ with
$l_n(\tilde{\theta})+\lambda_n I(\tilde{\theta})\leq
l_n(\theta_n^*)+\lambda_n I(\theta_n^*)$. Let $ 1 < d_0 \le d_b$. It holds
that
\begin{eqnarray*}
&& P\left(I(\tilde{\theta}-\theta_n^*)\leq d_0
\frac{\zeta_n^*}{b}\right) \\
&& \qquad \qquad \le P\left(I(\tilde{\theta}-\theta_n^*)\leq
\left[1+(d_0-1)(1+b)^{-N}\right]\frac{\zeta_n^*}{b}\right)\\
&&\qquad \qquad \qquad + \ N
\left\{\left(1+\frac{3}{10}W^2\right)\exp\left(-n\bar{a}_n^2r_1^2\right)+2\exp(-n\pi^2/2)\right\}.
\end{eqnarray*}
\end{cor}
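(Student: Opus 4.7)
The plan is to prove Corollary 3.3 by iterating Lemma 3.6 $N$ times. Lemma 3.6 provides a one-step contraction: starting from the event $\{I(\tilde{\theta}-\theta_n^*) \le d_0 \zeta_n^*/b\}$, it upper-bounds its probability by that of $\{I(\tilde{\theta}-\theta_n^*) \le d_0^{(1)} \zeta_n^*/b\}$, where $d_0^{(1)} := (d_0+b)/(1+b)$, plus a fixed error tail $T_n := (1 + \tfrac{3}{10}W^2)\exp(-n\bar{a}_n^2 r_1^2) + 2\exp(-n\pi^2/2)$. So the idea is simply to apply this contraction repeatedly.

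First I would define, for $k = 0, 1, \dots, N$, the shrinking constants
\begin{equation*}
d_0^{(0)} := d_0, \qquad d_0^{(k+1)} := \frac{d_0^{(k)} + b}{1+b},
\end{equation*}
and verify the closed form $d_0^{(k)} = 1 + (d_0 - 1)(1+b)^{-k}$ by induction: writing $d_0^{(k)} - 1 = (d_0^{(k-1)} - 1)/(1+b)$ gives the geometric decay immediately. Next I would check that $d_0^{(k)}$ stays in the range where Lemma 3.6 may be invoked, namely $1 < d_0^{(k)} \le d_b$. Since $d_0 > 1$ and $(1+b)^{-k} > 0$, we have $d_0^{(k)} > 1$; since the sequence is decreasing and bounded above by $d_0 \le d_b$, we also have $d_0^{(k)} \le d_b$. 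So Lemma 3.6 applies at every stage with the pair $(d_0^{(k)}, d_0^{(k+1)})$ in place of $(d_0, (d_0+b)/(1+b))$.

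Then I would chain the bounds: for each $k = 0, \dots, N-1$, Lemma 3.6 (with $d_0 \leftarrow d_0^{(k)}$) yields
\begin{equation*}
P\left(I(\tilde{\theta}-\theta_n^*) \le d_0^{(k)} \zeta_n^*/b\right)
\le P\left(I(\tilde{\theta}-\theta_n^*) \le d_0^{(k+1)} \zeta_n^*/b\right) + T_n.
\end{equation*}
Telescoping these $N$ inequalities (or iterating once and invoking induction on $N$) gives
\begin{equation*}
P\left(I(\tilde{\theta}-\theta_n^*) \le d_0 \zeta_n^*/b\right)
\le P\left(I(\tilde{\theta}-\theta_n^*) \le d_0^{(N)} \zeta_n^*/b\right) + N\, T_n,
\end{equation*}
and substituting the closed form for $d_0^{(N)}$ yields exactly the stated inequality.

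The argument is essentially bookkeeping, with no genuine obstacle; the only point that requires brief attention is checking that the iterates $d_0^{(k)}$ never leave the region $(1, d_b]$ where Lemma 3.6 is applicable, which as noted is automatic from the recursion's monotone convergence to $1$ from above. The error terms accumulate additively because we use a union bound across the $N$ successive applications of Lemma 3.6.
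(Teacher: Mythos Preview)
Your proposal is correct and matches the paper's approach exactly: the paper's proof is simply ``Repeat Lemma \ref{Lemma 3.6} $N$ times,'' and your argument spells this out carefully, including the explicit recursion $d_0^{(k)} = 1 + (d_0-1)(1+b)^{-k}$ and the check that each iterate remains in $(1,d_b]$. There is nothing to add.
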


\begin{proof}
Repeat Lemma \ref{Lemma 3.6} $N$ times.
\end{proof}

\begin{lem} \label{Lemma 3.7}
Suppose Conditions I$(b,\delta)$ and II$(b,\delta,d)$
are met. Define
\begin{equation*}
\tilde{\theta}_s=s\hat{\theta}_n+(1-s)\theta_n^*,
\end{equation*}
where
\begin{equation*}
s=\frac{d \zeta_n^*}{d \zeta_n^*+bI(\hat{\theta}_n-\theta_n^*)}.
\end{equation*}
Then for any integer $N$, with probability at least
$$1-N\left\{\left(1+\frac{3}{10}W^2\right)\exp\left(-n\bar{a}_n^2r_1^2\right)+2\exp\left(-n\pi^2/2\right)\right\},$$
we have
\begin{equation*}
I(\tilde{\theta}_s-\theta_n^*)\leq
\left(1+(d-1)(1+b)^{-N}\right)\frac{\zeta_n^*}{b}.
\end{equation*}
\end{lem}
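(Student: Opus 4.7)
The plan is to apply Corollary \ref{Cor 3.3} to $\tilde{\theta}_s$ with the starting scale $d_0 = d$. Two things need to be verified for this to be legal: first, that $\tilde{\theta}_s$ satisfies the hypothesis of Corollary \ref{Cor 3.3}, namely $l_n(\tilde{\theta}_s) + \lambda_n I(\tilde{\theta}_s) \leq l_n(\theta_n^*) + \lambda_n I(\theta_n^*)$; and second, that the initial ``radius'' is controlled, namely that the event $\{I(\tilde{\theta}_s - \theta_n^*) \leq d\zeta_n^*/b\}$ has probability one.

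First I would handle the deterministic radius bound. Since $\tilde{\theta}_s - \theta_n^* = s(\hat{\theta}_n - \theta_n^*)$, we have
\begin{equation*}
I(\tilde{\theta}_s - \theta_n^*) = s\, I(\hat{\theta}_n - \theta_n^*) = \frac{d\zeta_n^*\, I(\hat{\theta}_n - \theta_n^*)}{d\zeta_n^* + b\,I(\hat{\theta}_n - \theta_n^*)} \leq \frac{d\zeta_n^*}{b},
\end{equation*}
where the inequality follows because the ratio is monotone increasing in $I(\hat{\theta}_n - \theta_n^*)$ with supremum $d\zeta_n^*/b$. Note that $d \leq d_b$ by the definition of $d_b$, so $d_0 := d$ is admissible in Corollary \ref{Cor 3.3}.

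Next I would establish the convex minimization inequality. Set $F(\theta) := l_n(\theta) + \lambda_n I(\theta)$, which is convex on $\Theta$ since $l_n$ is convex (as the negative log partial likelihood) and the weighted $\ell_1$ norm is convex. Because $\hat{\theta}_n$ is a minimizer of $F$, we have $F(\hat{\theta}_n) \leq F(\theta_n^*)$, and hence
\begin{equation*}
F(\tilde{\theta}_s) \leq sF(\hat{\theta}_n) + (1-s)F(\theta_n^*) \leq sF(\theta_n^*) + (1-s)F(\theta_n^*) = F(\theta_n^*),
\end{equation*}
which is exactly the hypothesis required for Corollary \ref{Cor 3.3} applied to $\tilde{\theta} = \tilde{\theta}_s$.

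Finally, I would invoke Corollary \ref{Cor 3.3} with $d_0 = d$. Since the deterministic bound $I(\tilde{\theta}_s - \theta_n^*) \leq d\zeta_n^*/b$ implies $P(I(\tilde{\theta}_s - \theta_n^*) \leq d\zeta_n^*/b) = 1$, the corollary yields
\begin{equation*}
1 \leq P\!\left(I(\tilde{\theta}_s-\theta_n^*)\leq [1+(d-1)(1+b)^{-N}]\tfrac{\zeta_n^*}{b}\right) + N\Big\{\big(1+\tfrac{3}{10}W^2\big)e^{-n\bar{a}_n^2 r_1^2} + 2e^{-n\pi^2/2}\Big\},
\end{equation*}
rearranging which gives the claimed lower bound on the probability. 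The only real content here is assembling the pieces correctly; there is no genuine obstacle since the hard analytic work was already done in Lemma \ref{Lemma 3.6} and Corollary \ref{Cor 3.3}. The only subtlety worth double-checking is that $d \leq d_b$ so that $d_0 = d$ is an allowed choice, which holds by the max in the definition of $d_b$.
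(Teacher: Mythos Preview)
Your proof is correct and follows exactly the approach the paper indicates: use convexity of $l_n(\theta)+\lambda_n I(\theta)$ to verify the hypothesis of Corollary~\ref{Cor 3.3} for $\tilde{\theta}_s$, observe that the definition of $s$ forces $I(\tilde{\theta}_s-\theta_n^*)\le d\zeta_n^*/b$ deterministically, and then apply Corollary~\ref{Cor 3.3} with $d_0=d$. The paper's own proof is a one-line reference to convexity and Corollary~\ref{Cor 3.3}; you have simply spelled out the details.
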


\begin{proof}
Since the negative log partial likelihood $l_n(\theta)$ and the lasso
penalty are both convex with respect to $\theta$, applying Corollary \ref{Cor
3.3}, we obtain the above inequality.
\end{proof}

\begin{lem} \label{Lemma 3.8}
Suppose Conditions I$(b,\delta)$ and II$(b,\delta,d)$ are met. Let
$N_1\in {\bf N} :=\{1,2,\dots\}$ and $N_2\in {\bf N}\cup \{0\}$. Define
$\delta_1=(1+b)^{-N_1}$ and $\delta_2=(1+b)^{-N_2}$.
For any $n$, with probability at least
$$1-(N_1+N_2)\left\{\left(1+\frac{3}{10}W^2\right)\exp\left(-n\bar{a}_n^2r_1^2\right)+2\exp(-n\pi^2/2)\right\},
$$ we have
\begin{equation*}
I(\hat{\theta}_n-\theta_n^*)\leq d(\delta_1,
\delta_2)\frac{\zeta_n^*}{b},
\end{equation*}
where
\begin{equation*}
d(\delta_1,
\delta_2)=1+\frac{1+(d^2-1)\delta_1}{(d-1)(1-\delta_1)}\delta_2.
\end{equation*}
\end{lem}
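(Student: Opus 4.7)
The plan is to combine the convex-combination bound from Lemma \ref{Lemma 3.7} with a further iteration of Corollary \ref{Cor 3.3}, with $N_1$ and $N_2$ governing these two stages respectively. Writing $p_n := (1+\tfrac{3}{10}W^2)\exp(-n\bar a_n^2 r_1^2) + 2\exp(-n\pi^2/2)$ for the per-iteration failure probability, the target probability $1-(N_1+N_2)p_n$ is exactly what a union bound will deliver after the $N_1+N_2$ iterations are carried out.

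In the first stage I would apply Lemma \ref{Lemma 3.7} with $N=N_1$, so that on an event $A$ of probability at least $1-N_1 p_n$ the convex combination satisfies $I(\tilde\theta_s-\theta_n^*)\le (1+(d-1)\delta_1)\zeta_n^*/b$. Because $\tilde\theta_s-\theta_n^* = s(\hat\theta_n-\theta_n^*)$ with $s$ the explicit ratio given in Lemma \ref{Lemma 3.7}, and because $I$ is positively homogeneous, this bound translates into a rational inequality for $u := I(\hat\theta_n-\theta_n^*)$ that is linear in $u$ once the denominator $d\zeta_n^*+bu$ is cleared. Solving it yields the deterministic bound $u\le d_0\zeta_n^*/b$ on the event $A$, where
\[
d_0 = \frac{d(1+(d-1)\delta_1)}{(d-1)(1-\delta_1)}.
\]

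In the second stage I would observe that $\hat\theta_n$ itself satisfies the minimizer inequality $l_n(\hat\theta_n)+\lambda_n I(\hat\theta_n)\le l_n(\theta_n^*)+\lambda_n I(\theta_n^*)$, so Corollary \ref{Cor 3.3} applies directly to $\tilde\theta=\hat\theta_n$ with the $d_0$ above and $N=N_2$. On a further event $B$ of probability at least $1-N_2 p_n$, the bound from Stage 1 shrinks to $u\le (1+(d_0-1)\delta_2)\zeta_n^*/b$. A brief algebraic simplification gives $d_0-1 = (1+(d^2-1)\delta_1)/((d-1)(1-\delta_1))$, so the right-hand side coincides with $d(\delta_1,\delta_2)\zeta_n^*/b$. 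A union bound over $A\cap B$ then produces the advertised probability.

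The main obstacle I anticipate is verifying the side condition $1<d_0\le d_b$ required for Corollary \ref{Cor 3.3} to be invoked in Stage 2. Strict positivity is immediate from $\delta_1 = (1+b)^{-N_1} < 1$. For the upper bound, a short check shows that $d_0$ is monotone increasing in $\delta_1$ (hence monotone decreasing in $N_1$), so it suffices to check the inequality at $N_1=1$, where $d_0$ collapses to $d(b+d)/((d-1)b)$; this is dominated by $d_b = d((b+d)/((d-1)b)\vee 1)$ by its very definition. Once this bookkeeping is in place, no further probabilistic ingredients are needed—the rest is pure algebra and the union bound.
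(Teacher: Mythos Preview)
Your proposal is correct and follows the same two-stage argument the paper has in mind: the paper's proof simply defers to Lemma~A.7 of van de Geer (2008), and your outline---first pass Lemma~\ref{Lemma 3.7} with $N=N_1$ to obtain $I(\hat\theta_n-\theta_n^*)\le d_0\,\zeta_n^*/b$, then feed $\hat\theta_n$ into Corollary~\ref{Cor 3.3} with $N=N_2$ and combine by a union bound---is precisely that argument. Your algebra for $d_0$, $d_0-1$, and the verification of $1<d_0\le d_b$ via monotonicity in $\delta_1$ and the check at $N_1=1$ are all in order.
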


\begin{proof}
The proof is exactly the same as that of Lemma A.7 in \cite{geer2008}, with a slightly different probability bound.
\end{proof}

We now provide the major theorem of the oracle inequalities for the
Cox model lasso estimator.

\begin{thm} \label{Thm 3.1}
Suppose Assumptions A-E and Conditions I$(b,\delta)$ and
II$(b,\delta,d)$ are met. Let
$$
\Delta(b,\delta,\delta_1,\delta_2):=d(\delta_1,\delta_2)\frac{1-\delta^2}{\delta
b}\vee 1.
$$
We have with probability at least
\begin{eqnarray*}
&& 1-\Bigg\{\log_{1+b}\frac{(1+b)^2\Delta(b,\delta,\delta_1,\delta_2)}{\delta_1\delta_2}\Bigg\}
\Bigg\{\left(1+\frac{3}{10}W^2\right)\exp\left(-n\bar{a}_n^2r_1^2\right) \\
&& \hspace{2.5in} + \ 2\exp\left(-n\pi^2/2\right)\Bigg\}
\end{eqnarray*}
that
\begin{equation*}
\mathcal{E}(f_{\hat{\theta}_n})\leq \frac{1}{1-\delta}\epsilon_n^*,
\end{equation*}
and moreover,
\begin{equation*}
I(\hat{\theta}_n-\theta_n^*)\leq d(\delta_1,
\delta_2)\frac{\zeta_n^*}{b}.
\end{equation*}
\end{thm}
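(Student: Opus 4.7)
The plan is to combine Lemma \ref{Lemma 3.8} (which controls $I(\hat{\theta}_n-\theta_n^*)$) with the basic optimality inequality and the concentration bounds of Corollaries \ref{Cor 3.1} and \ref{Cor 3.2} (which control the deviation of the empirical loss from the expected loss). First, pick integers $N_1,N_2$ so that $\delta_1=(1+b)^{-N_1}$ and $\delta_2=(1+b)^{-N_2}$, and apply Lemma \ref{Lemma 3.8} to obtain, with exceptional probability at most $(N_1+N_2)$ times the basic error rate,
\[
I(\hat{\theta}_n-\theta_n^*)\le d(\delta_1,\delta_2)\zeta_n^*/b=:M.
\]
A short check of the definitions shows $d(\delta_1,\delta_2)\le d_b$, so Lemma \ref{Lemma 3.5} applies at $\hat{\theta}_n$ on this event; this already yields the $I$-norm claim of the theorem.

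For the excess-risk claim, rewrite the basic inequality $l_n(\hat{\theta}_n)+\lambda_n I(\hat{\theta}_n)\le l_n(\theta_n^*)+\lambda_n I(\theta_n^*)$ by inserting $\tilde{l}_n$ and $l$, so that
\[
l_n(\hat{\theta}_n)-l_n(\theta_n^*) = (l_n-\tilde{l}_n)(\hat{\theta}_n)-(l_n-\tilde{l}_n)(\theta_n^*) + (\tilde{l}_n-l)(\hat{\theta}_n)-(\tilde{l}_n-l)(\theta_n^*) + \mathcal{E}(f_{\hat{\theta}_n})-\mathcal{E}(f_{\theta_n^*}).
\]
On the further event $\{Z_{\hat{\theta}_n}(M)\le\bar{\lambda}_{n,0}^A M,\ R_{\hat{\theta}_n}(M)\le\bar{\lambda}_{n,0}^B M\}$ supplied by Corollaries \ref{Cor 3.1} and \ref{Cor 3.2}, the first two bracketed differences are each controlled, so the optimality inequality becomes
\[
\mathcal{E}(f_{\hat{\theta}_n}) \le \mathcal{E}(f_{\theta_n^*}) + \bar{\lambda}_{n,0} M + \lambda_n\bigl[I(\theta_n^*)-I(\hat{\theta}_n)\bigr].
\]
Using the triangle inequality $I(\theta_n^*)-I(\hat{\theta}_n)\le I_1-I_2$ (with $I_j:=I_j(\hat{\theta}_n-\theta_n^*|\theta_n^*)$), the decomposition $M=I_1+I_2$, and $\lambda_n=(1+b)\bar{\lambda}_{n,0}$, this transforms into
\[
\mathcal{E}(f_{\hat{\theta}_n}) + b\bar{\lambda}_{n,0} I_2 \le \mathcal{E}(f_{\theta_n^*}) + (2+b)\bar{\lambda}_{n,0} I_1 \le \mathcal{E}(f_{\theta_n^*}) + 2\lambda_n I_1.
\]
Dropping the nonnegative $b\bar{\lambda}_{n,0} I_2$ term on the left and substituting Lemma \ref{Lemma 3.5}'s bound $2\lambda_n I_1\le\delta\mathcal{E}(f_{\hat{\theta}_n})+\epsilon_n^*-\mathcal{E}(f_{\theta_n^*})$ cancels the $\mathcal{E}(f_{\theta_n^*})$ terms and rearranges to $(1-\delta)\mathcal{E}(f_{\hat{\theta}_n})\le\epsilon_n^*$, as required.

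The hard part is the probability accounting that produces the stated $\log_{1+b}\{(1+b)^2\Delta(b,\delta,\delta_1,\delta_2)/(\delta_1\delta_2)\}$ prefactor. Since Corollaries \ref{Cor 3.1} and \ref{Cor 3.2} are pointwise in $\theta$, controlling $Z_{\hat{\theta}_n}(M)$ and $R_{\hat{\theta}_n}(M)$ at the random argument $\hat{\theta}_n$ requires a peeling argument in which the radius $M$ ranges over a geometric grid $\{(1+b)^k\zeta_n^*/b\}$ reaching up to $\Delta\cdot\zeta_n^*/b$, followed by a union bound. The $\log_{1+b}\Delta+O(1)$ exceptional events produced by this peeling, added to the $N_1+N_2=\log_{1+b}\{1/(\delta_1\delta_2)\}$ layers already inside Lemma \ref{Lemma 3.8}, combine to the advertised count. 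The non-Lipschitz character of the Cox loss prevents a direct reuse of van de Geer's GLM peeling, so the uniform bound at each scale must be assembled from the empirical-process estimates of Lemmas \ref{Lemma 3.3} and \ref{Lemma 3.4}, which were constructed precisely for this purpose.
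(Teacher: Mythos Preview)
Your overall strategy matches the paper's, but there is a genuine gap in the deterministic part of the argument. You write ``the decomposition $M=I_1+I_2$'' with $M:=d(\delta_1,\delta_2)\zeta_n^*/b$, but Lemma~\ref{Lemma 3.8} only gives $I(\hat\theta_n-\theta_n^*)=I_1+I_2\le M$, not equality. With only $I_1+I_2\le M$, your displayed inequality
\[
\mathcal{E}(f_{\hat\theta_n})+b\bar\lambda_{n,0}I_2\le \mathcal{E}(f_{\theta_n^*})+(2+b)\bar\lambda_{n,0}I_1
\]
does not follow: the term $\bar\lambda_{n,0}M$ on the right of the basic inequality leaves behind an uncontrolled remainder $\bar\lambda_{n,0}\bigl(M-(I_1+I_2)\bigr)$. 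If you carry this through, the best you get is $(1-\delta)\mathcal{E}(f_{\hat\theta_n})\le\bigl(1+d(\delta_1,\delta_2)/b\bigr)\epsilon_n^*$, not the claimed $\epsilon_n^*$.

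This is exactly why the paper introduces $c:=\delta b/(1-\delta^2)$ and splits into the shells $(1+b)^{j-1}c\,\zeta_n^*/b<I(\hat\theta_n-\theta_n^*)\le(1+b)^jc\,\zeta_n^*/b$. On each shell the radius used in Corollaries~\ref{Cor 3.1} and~\ref{Cor 3.2} satisfies $M\le(1+b)I(\hat\theta_n-\theta_n^*)$, so $\bar\lambda_{n,0}M\le(1+b)\bar\lambda_{n,0}(I_1+I_2)$, and combining with $\lambda_n(I_1-I_2)$ collapses cleanly to $2\lambda_n I_1$, after which Lemma~\ref{Lemma 3.5} finishes. The innermost ball $I(\hat\theta_n-\theta_n^*)\le c\,\zeta_n^*/b$ is handled separately, using that $c$ is chosen so that $\bar\lambda_{n,0}c\,\zeta_n^*/b=\frac{\delta}{1-\delta^2}\epsilon_n^*$ is already small enough. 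In other words, the peeling is not merely a bookkeeping device for the union bound, as your final paragraph suggests; it is what makes the excess-risk inequality close. The $\log_{1+b}\{(1+b)^2\Delta/(\delta_1\delta_2)\}$ prefactor then arises as $N_1+N_2$ from Lemma~\ref{Lemma 3.8} plus the $J+1\le\log_{1+b}\{(1+b)^2d(\delta_1,\delta_2)/c\}$ shells.
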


\begin{proof} The proof follows the same ideas in the proof of Theorem A.4 in \cite{geer2008}, with exceptions of pointwise arguments and slightly different probability bounds. Since this is the major result, to be self-contained, we provide a detailed proof here despite the amount of overlaps.

Similar to \cite{geer2008}, we define
$\hat{\mathcal{E}}:=\mathcal{E}(f_{\hat{\theta}_n})$ and
$\mathcal{E}^*:=\mathcal{E}(f_{\theta_n^*})$; use the
notation $I_1(\theta):=I_1(\theta|\theta_n^*)$ and
$I_2(\theta):=I_2(\theta|\theta_n^*)$;
set $$c:=\frac{\delta b}{1-\delta^2};$$ and consider the cases (a)
$c<d(\delta_1, \delta_2)$ and (b) $c\geq d(\delta_1, \delta_2)$.

(a) Consider $c<d(\delta_1, \delta_2)$. Let $J$ be an
integer satisfying $(1+b)^{J-1}c\leq d(\delta_1, \delta_2)$ and
$(1+b)^{J}c> d(\delta_1, \delta_2)$. We consider the cases (a1)
$c\zeta_n^*/b<I(\hat{\theta}_n-\theta_n^*)\leq d(\delta_1,
\delta_2)\zeta_n^*/b$ and (a2)
$I(\hat{\theta}_n-\theta_n^*)\leq c\zeta_n^*/b$.

(a1) If $c\zeta_n^*/b<I(\hat{\theta_n}-\theta_n^*)\leq
d(\delta_1, \delta_2)\zeta_n^*/b$, then
\begin{equation*}
(1+b)^{j-1}c\frac{\zeta_n^*}{b}<I(\hat{\theta}_n-\theta_n^*)\leq
(1+b)^{j}c\frac{\zeta_n^*}{b}
\end{equation*}
for some $j\in \{1,\cdots,J\}$.
Let
\begin{equation*}
d_0=c(1+b)^{j-1}\leq d(\delta_1, \delta_2)\leq d_b.
\end{equation*}
From Corollary \ref{Cor 3.1}, with probability at least
$1-\exp\left(-n\bar{a}_n^2 r_1^2\right)$ we have
$Z_{\hat{\theta}_n}((1+b)d_0 \zeta_n^*/b)\leq
(1+b)d_0\bar{\lambda}_{n,0}^A \zeta_n^*/b$. Since
$
l_n(\hat{\theta}_n)+\lambda_nI(\hat{\theta}_n)\leq
l_n(\theta_n^*)+\lambda_nI(\theta_n^*),
$
from equation (\ref{lema}), we have
\begin{equation*}
\hat{\mathcal{E}}+\lambda_n I(\hat{\theta}_n)\leq
R_{\hat{\theta}_n}\left((1+b)d_0\frac{\zeta_n^*}{b}\right)+\mathcal{E}^*+\lambda_n
I(\theta_n^*)+(1+b)\bar{\lambda}_{n,0}^A d_0\frac{\zeta_n^*}{b}.
\end{equation*}
By (\ref{dm}),
$R_{\hat{\theta}_n}((1+b)d_0\zeta_n^*b)$ is
bounded by $(1+b)\bar{\lambda}_{n,0}^Bd_0\zeta_n^*/b$ with
probability at least
$$1-\frac{3}{10}W^2 \exp\left(-n\bar{a}_n^2r_1^2\right)-2\exp\left(-n\pi^2/2\right),$$ then we
have
\begin{eqnarray*}
\hat{\mathcal{E}}+(1+b)\bar{\lambda}_{n,0}I(\hat{\theta}_n)& \leq&
(1+b)\bar{\lambda}_{n,0}^Bd_0\frac{\zeta_n^*}{b}+
\mathcal{E}^*+(1+b)\bar{\lambda}_{n,0}I(\theta_n^*)\\
&& \qquad + \ (1+b)\bar{\lambda}_{n,0}^Ad_0\frac{\zeta_n^*}{b}\\
& \leq&
(1+b)\bar{\lambda}_{n,0}I(\hat{\theta}_n-\theta_n^*)+\mathcal{E}^*+(1+b)
\bar{\lambda}_{n,0}I(\theta_n^*).
\end{eqnarray*}
Since $I(\hat{\theta}_n)=I_1(\hat{\theta}_n)+I_2(\hat{\theta}_n)$,
$I(\hat{\theta}_n-\theta_n^*)=I_1(\hat{\theta}_n-\theta_n^*)+I_2(\hat{\theta}_n)$,
and $I(\theta_n^*)=I_1(\theta_n^*)$, by triangular inequality we obtain
\begin{equation*}
\hat{\mathcal{E}}\leq
2(1+b)\bar{\lambda}_{n,0}I_1(\hat{\theta}_n-\theta_n^*)+\mathcal{E}^*.
\end{equation*}
From Lemma \ref{Lemma 3.5},
\begin{eqnarray*}
\hat{\mathcal{E}}&\leq&
\delta\hat{\mathcal{E}}+\epsilon_n^*-\mathcal{E}^*+\mathcal{E}^*=\delta\hat{\mathcal{E}}+\epsilon_n^*.
\end{eqnarray*}
Hence,
\begin{equation*}
\hat{\mathcal{E}}\leq \frac{1}{1-\delta}\epsilon_n^*.
\end{equation*}

(a2) If $I(\hat{\theta}_n-\theta_n^*)\leq c\zeta_n^*/b$, from
equation (\ref{lemc}) with $d_0=c$, with probability at least
$$1-\left\{\left(1+\frac{3}{10}W^2\right)\exp\left(-n\bar{a}_n^2r_1^2\right)
+2\exp(-n\pi^2/2)\right\} ,$$ we have
\begin{equation*}
\hat{\mathcal{E}}+(1+b)\bar{\lambda}_{n,0}I(\hat{\theta}_n)\leq
\frac{\delta}{1- \delta^2}\bar{\lambda}_{n,0}\zeta_n^*+
\mathcal{E}^*+(1+b)\bar{\lambda}_{n,0}I(\theta_n^*).
\end{equation*}
By triangular inequality, Lemma \ref{Lemma 3.5} and ($A4$),
\begin{eqnarray*}
\hat{\mathcal{E}}& \leq&
\frac{\delta}{1-\delta^2}\bar{\lambda}_{n,0}\zeta_n^*+\mathcal{E}^*
+(1+b)\bar{\lambda}_{n,0}I_1(\hat{\theta}_n-\theta_n^*)\\
&\leq&\frac{\delta}{1-\delta^2}\bar{\lambda}_{n,0}\frac{\epsilon_n^*}{\bar{\lambda}_{n,0}}
+\mathcal{E}^*+\frac{\delta}{2}\hat{\mathcal{E}}+\frac{1}{2}\epsilon_n^*-\frac{1}{2}\mathcal{E}^*\\
&=&\left(\frac{\delta}{1-\delta^2}+\frac{1}{2}\right)\epsilon_n^*+\frac{1}{2}\mathcal{E}^*+\frac{\delta}{2}\hat{\mathcal{E}}\\
&\leq&\left(\frac{\delta}{1-\delta^2}+\frac{1}{2}\right)\epsilon_n^*+\frac{1}{2(1+\delta)}\epsilon_n^*+\frac{\delta}{2}\hat{\mathcal{E}}.
\end{eqnarray*}
Hence,
\begin{equation*}
\hat{\mathcal{E}}\leq
\frac{2}{2-\delta}\left[\frac{\delta}{1-\delta^2}+\frac{1}{2}+\frac{1}{2(1+\delta)}\right]\epsilon_n^*
=\frac{1}{1-\delta}\epsilon_n^*.
\end{equation*}

Furthermore, by Lemma \ref{Lemma 3.8},  we have with probability at least
$$1-(N_1+N_2)\left\{\left(1+\frac{3}{10}W^2\right)\exp\left(-n\bar{a}_n^2r_1^2\right)
+2\exp\left(-n\pi^2/2\right)\right\} $$ that
\begin{equation*}
I(\hat{\theta}_n-\theta_n^*)\leq
d(\delta_1,\delta_2)\frac{\zeta_n^*}{b},
\end{equation*}
where
\begin{eqnarray*}
N_1=\log_{1+b}\left(\frac{1}{\delta_1}\right), \ N_2=\log_{1+b}\left(\frac{1}{\delta_2}\right).
\end{eqnarray*}

(b) Consider $c\geq d(\delta_1,\delta_2)$. On the set where
$I(\hat{\theta}_n-\theta_n^*)\leq d(\delta_1,\delta_2)\zeta_n^*/b$,
from equation (\ref{lemc}) we have with probability at least
$$1-\left\{\left(1+\frac{3}{10}W^2\right)\exp\left(-n\bar{a}_n^2r_1^2\right)
+2\exp\left(-n\pi^2/2\right)\right\} $$ that
\begin{eqnarray*}
\hat{\mathcal{E}}+(1+b)\bar{\lambda}_{n,0}I(\hat{\theta}_n)&\leq&
\bar{\lambda}_{n,0}d(\delta_1,\delta_2)\frac{\zeta_n^*}{b}+\mathcal{E}^*
+(1+b)\bar{\lambda}_{n,0}I(\theta_n^*)\\
& \leq&
\frac{\delta}{1-\delta^2}\bar{\lambda}_{n,0}\zeta_n^*+\mathcal{E}^*+(1+b)\bar{\lambda}_{n,0}I(\theta_n^*),
\end{eqnarray*}
which is the same as (a2) and leads to the same result.

\medskip

To summarize, let
\begin{eqnarray*}
A=\left\{\hat{\mathcal{E}}\leq
\frac{1}{1-\delta}\epsilon_n^*\right\}, \quad
B=\left\{I(\hat{\theta}_n-\theta_n^*)\leq
d(\delta_1,\delta_2)\frac{\zeta_n^*}{b}\right\}.
\end{eqnarray*}
Note that
\begin{equation*}
J+1\leq \log_{1+b}\left(\frac{(1+b)^2d(\delta_1,\delta_2)}{c}\right).
\end{equation*}
Under case (a), we have
\begin{eqnarray*}
&&\hspace{-0.2in} P\left(A\cap B\right) \\
&& \ \ = \ P({\rm a1})-P(A^c\cap {\rm a1})+P({\rm a2})-P(A^c\cap {\rm a2})\\
&& \ \ \geq \
P({\rm a1})-J\left\{\left(1+\frac{3}{10}W^2\right)\exp\left(-n\bar{a}_n^2r_1^2\right)+2\exp(-n\pi^2/2)\right\}\\
&& \qquad \qquad  + \ P({\rm a2})-\left\{\left(1+\frac{3}{10}W^2\right)\exp(-n\bar{a}_n^2r_1^2)+2\exp\left(-n\pi^2/2\right)\right\}
\\
&& \ \ = \ P(B)-(J+1)\left\{\left(1+\frac{3}{10}W^2\right)\exp\left(-n\bar{a}_n^2r_1^2\right)+2\exp\left(-n\pi^2/2\right)\right\}
\\
&& \ \ \geq \ 1-(N_1+N_2+J+1)\Bigg\{\left(1+\frac{3}{10}W^2\right)\exp\left(-n\bar{a}_n^2r_1^2\right) \\
&& \ \ \qquad + \ 2\exp\left(-n\pi^2/2\right)\Bigg\}
\\
&& \ \ \geq \
1-\log_{1+b}\left\{\frac{(1+b)^2}{\delta_1\delta_2}\cdot\frac{d(\delta_1,\delta_2)(1-\delta^2)}
{\delta b}\right\}\\
&&\ \ \qquad \bigg\{\bigg(1+\frac{3}{10}W^2\bigg)
\exp\left(-n\bar{a}_n^2r_1^2\right)+2\exp(-n\pi^2/2)\bigg\} .
\end{eqnarray*}
Under case (b),
\begin{eqnarray*}
&&P\left(A\cap B\right) \\
&& \qquad = \ P(B)-P(A^c\cap B)\\
&&\qquad \geq \
P(B)-\left\{\left(1+\frac{3}{10}W^2\right)\exp\left(-n\bar{a}_n^2r_1^2\right)+2\exp\left(-n\pi^2/2\right)\right\}
\\
&&\qquad \geq \
1-(N_1+N_2+2)\Bigg\{\left(1+\frac{3}{10}W^2\right)\exp\left(-n\bar{a}_n^2r_1^2\right) \\
&& \qquad \qquad + \ 2\exp\left(-n\pi^2/2\right)\Bigg\}
\\
&&\qquad = \ 1-\log_{1+b}\left\{\frac{(1+b)^2}{\delta_1\delta_2}\right\} \\
&& \qquad \qquad \left\{\left(1+\frac{3}{10}W^2\right)\exp\left(-n\bar{a}_n^2r_1^2\right)+2\exp(-n\pi^2/2)\right\} .
\end{eqnarray*}
We thus obtain the desired result.
\end{proof}

\subsection{Random normalization weights in the penalty}

The case with random weights can be argued in the exactly the same way as that in \cite{geer2008}, for which the same tail probability given in Lemma A.9 of \cite{geer2008} is added to the probability bound in Theorem \ref{Thm 3.1} under the same set of conditions for Theorem A.5 in \cite{geer2008}. Thus details are omitted.

\end{document}